\title{\vspace{-0.6cm} On subsets of the hypercube with prescribed Hamming distances}
\newtheorem{theorem}{Theorem}[section]
\newtheorem{lemma}[theorem]{Lemma}
\newtheorem{corollary}[theorem]{Corollary}
\date{}
\author{
Hao Huang \thanks{
Department of Math and CS, Emory University, Atlanta, USA. Research supported in part by the Collaboration Grants from the Simons Foundation.
}
\and
Oleksiy Klurman \thanks{Department of Mathematics, KTH Royal Institute of Technology, Stockholm, Sweden.}
\and
Cosmin Pohoata \thanks{Department of Mathematics, California Institute of Technology, Pasadena, USA.}
}
\begin{document}
\maketitle
\abstract A theorem of Kleitman in extremal combinatorics states that a collection of binary vectors in $\{0, 1\}^n$ with diameter $d$ has cardinality at most that of a Hamming ball of radius $d/2$. In this paper, we give an algebraic proof of Kleitman's Theorem, by carefully choosing a pseudo-adjacency matrix for certain Hamming graphs, and applying the Cvetkovi\'c bound on independence numbers. This method also allows us to prove several extensions and generalizations of Kleitman's Theorem to other allowed distance sets, in particular blocks of consecutive integers that do not necessarily grow linearly with $n$. We also improve on a theorem of Alon about subsets of $\mathbb{F}_{p}^{n}$ whose difference set does not intersect $\left\{0,1\right\}^{n}$ nontrivially.

\section{Introduction}

A rough version of the isodiametric inequality (see \cite{isodiametric}) states that in $\mathbb{R}^n$, among all bodies of a given diameter, the $n$-dimensional ball has the largest volume. Analogues of the isodiametric inequality are considered in the discrete settings. Recall that the Hamming distance $d(\vec{x}, \vec{y})$ between two vectors $\vec{x}$ and $\vec{y}$ is the number of coordinates in which they differ. Solving a conjecture of Erd\H{o}s, in \cite{kleitman} Kleitman proved the following important theorem in extremal set theory. It can also be viewed as an isodiametric inequality for discrete hypercubes.

\begin{theorem}\label{thm_kleitman}
Suppose $\mathcal{F}$ is a collection of binary vectors in $\{0, 1\}^n$, such that the Hamming distance between any two vectors is at most $d < n$. Then 
$$|\mathcal{F}| \le 
\begin{cases}
{n \choose 0}+\binom{n}{1} + \cdots + \binom{n}{t},~~~~~\textup{for}~d=2t;\vspace{0.4cm}\\
2\left(\binom{n-1}{0}+ \cdots + \binom{n-1}{t}\right),~~~\textup{for~}d=2t+1.
\end{cases}.
$$
\end{theorem}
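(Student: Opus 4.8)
The plan is to realize $\mathcal{F}$ as an independent set in an appropriate Hamming distance graph and to bound its independence number via the Cvetkovi\'c inertia bound applied to a pseudo-adjacency matrix chosen inside the Bose--Mesner algebra of the Hamming scheme $H(n,2)$. For $d<n$ let $G_d$ be the graph on $\{0,1\}^n$ in which $\vec x\sim\vec y$ exactly when $d(\vec x,\vec y)>d$; the hypothesis of the theorem says precisely that $\mathcal F$ is independent in $G_d$, so it suffices to bound $\alpha(G_d)$. I will use the following form of the inertia bound: if $M$ is a real symmetric matrix indexed by $V(G)$ with $M_{uv}=0$ whenever $u\neq v$ and $uv\notin E(G)$, and all diagonal entries of $M$ are strictly positive, then $\alpha(G)\le n_+(M)$, the number of positive eigenvalues of $M$. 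Indeed, for an independent set $S$ the principal submatrix $M[S]$ is diagonal with positive entries, hence positive definite, and Cauchy interlacing forces $M$ to have at least $|S|$ positive eigenvalues.

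\textbf{The even case $d=2t$.} I would take $M=A_0+\sum_{i=2t+1}^{n}c_iA_i$, where $A_0=I,A_1,\dots,A_n$ are the distance matrices of $H(n,2)$ and $c_{2t+1},\dots,c_n\in\R$ are to be chosen. This $M$ is symmetric, its diagonal is identically $1>0$, and its off-diagonal entries vanish at every Hamming distance $1,\dots,2t$, i.e.\ on the non-edges of $G_{2t}$, so the inertia bound applies. Since the $A_i$ are simultaneously diagonalized with common eigenspaces $V_0,\dots,V_n$, where $\dim V_k=\binom nk$ and $A_i$ acts on $V_k$ as the Krawtchouk number $K_i(k)$, the eigenvalue of $M$ on $V_k$ is $\lambda_k=1+\sum_{i=2t+1}^n c_iK_i(k)$. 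Thus, if the $c_i$ can be chosen so that $\lambda_k<0$ for every $k$ outside a set $S\subseteq\{0,\dots,n\}$ with $\sum_{k\in S}\binom nk=\binom n0+\binom n1+\cdots+\binom nt$, then $n_+(M)\le\sum_{j\le t}\binom nj$ and the even case follows.

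\textbf{The crux.} Producing these coefficients is the heart of the matter and the step I expect to be hardest. It is convenient to pass to the dual basis $E_0,\dots,E_n$ of primitive idempotents: writing $M=\sum_k\lambda_kE_k$ one has $M_{\vec x\vec y}=2^{-n}\sum_k\lambda_kK_k\!\left(d(\vec x,\vec y)\right)$, so the required support condition is equivalent to the single-variable polynomial $\Lambda(y)=\sum_k\lambda_kK_k(y)$ vanishing at $y=1,2,\dots,2t$; equivalently $\Lambda$ is divisible by $\prod_{m=1}^{2t}(y-m)$, which leaves a cofactor of degree at most $n-2t$ to be chosen (and automatically $\Lambda(0)=2^n$, matching the diagonal of $M$). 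One must pick this cofactor so that the Krawtchouk coefficients $(\lambda_k)_{k=0}^n$ of $\Lambda$ are negative on eigenspaces of total dimension $2^n-\sum_{j\le t}\binom nj$. I would read off the cofactor from the extremal polynomial of the Delsarte-type linear program for diametric sets (anticodes) in the Hamming scheme, or construct it directly from Krawtchouk polynomials, and then verify the sign pattern using interlacing of zeros of Krawtchouk polynomials; the identity $\binom nk=\binom n{n-k}$ is what provides enough slack for the dimension count to come out exactly, and already for small $n$ the positive eigenspaces are not $V_0,\dots,V_t$, so one cannot simply hope the naive sign pattern works. A clean base case is $2t\ge n-1$, where the support forces $M=I+cA_n$, with eigenvalues $1\pm c$ of multiplicities $\sum_{j\le t}\binom nj$ and $2^n-\sum_{j\le t}\binom nj$, so any large $c$ works.

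\textbf{The odd case $d=2t+1$.} Here the target $2\bigl(\binom{n-1}0+\cdots+\binom{n-1}t\bigr)$ is not a sum of binomials $\binom nk$, so I would split off one coordinate, identifying $\{0,1\}^n$ with two copies of $\{0,1\}^{n-1}$, and look for a block matrix $M=\begin{pmatrix}M_{00}&M_{01}\\M_{01}^{\top}&M_{11}\end{pmatrix}$ with all blocks in the Bose--Mesner algebra of $H(n-1,2)$: here $M_{00}=M_{11}$ is supported on $H(n-1,2)$-distances $\{0\}\cup\{2t+2,\dots,n-1\}$ and $M_{01}$ on distances $\{2t+1,\dots,n-1\}$, which is exactly the condition that the off-diagonal zeros of $M$ cover the non-edges of $G_{2t+1}$ while the diagonal stays $\equiv1$. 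Such an $M$ is block-diagonalized along $V_k\oplus V_k$ ($k=0,\dots,n-1$) with $2\times2$ block $\begin{pmatrix}\alpha_k&\beta_k\\\beta_k&\alpha_k\end{pmatrix}$, hence eigenvalues $\alpha_k\pm\beta_k$ each of multiplicity $\binom{n-1}k$; choosing the coefficients so that both $\alpha_k\pm\beta_k$ are positive for $k\le t$ and both are negative for $k>t$ gives $n_+(M)\le 2\sum_{j\le t}\binom{n-1}j$. The coefficient choice is a variant of the even-case construction, now on $n-1$ coordinates, and verifying it is again the main obstacle.
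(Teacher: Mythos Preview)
Your framework is exactly the paper's: realize $\mathcal F$ as an independent set in $G_d$, pick a pseudo-adjacency matrix in the Bose--Mesner algebra of $H(n,2)$, and apply the Cvetkovi\'c inertia bound. But the proposal stops short of the actual content of the proof. You explicitly flag ``producing these coefficients'' as the crux and then defer it to ``read off from the Delsarte LP'' or ``construct directly from Krawtchouk polynomials and verify signs via interlacing''; neither is carried out, and this is not a routine step. The paper's proof \emph{is} the choice of coefficients together with a mechanism for determining the signs. For $d=2t$ the paper takes (in your notation, without the $A_0$ term) $c_k=\binom{\lfloor(k-1)/2\rfloor}{t}$, so that the generating function telescopes as
\[
\sum_{k\ge 2t+1} c_k x^{-k}=\frac{x+1}{(x^2-1)^{t+1}},
\]
and then $\lambda_i$ is the constant term of $(-1)^{t+1}(1+x)^{n-i-t}(1-x)^{i-t-1}$. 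This closed form, not Krawtchouk interlacing, is what makes the sign pattern readable: one finds $\lambda_{t+1}=\cdots=\lambda_{n-t}=(-1)^{t+1}$, $(-1)^i\lambda_i>0$ for $0\le i\le t$, and $\lambda_{n+1-i}=\lambda_i$, so the favourable eigenspaces are $V_0,V_2,\ldots$ paired with $V_{n-1},V_{n-3},\ldots$ (confirming your observation that they are not $V_0,\ldots,V_t$), and their dimensions sum to exactly $\sum_{j\le t}\binom nj$. Without this generating-function identity or an equivalent explicit computation, your outline is a plan rather than a proof.

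For $d=2t+1$ your block decomposition over $H(n-1,2)$ is a reasonable idea, but it is different from the paper's route and is again left without coefficients. The paper stays in $H(n,2)$ and takes $c_k=0$ for odd $k$, $c_k=\binom{k/2-1}{t}$ for even $k$; the same generating-function trick gives $\lambda_i$ as the constant term of $(-1)^{t+1}(1-x)^{i-t-1}(1+x)^{n-i-t-1}$, and now $\lambda_{n-i}=\lambda_i$, so the favourable eigenspaces come in symmetric pairs whose dimensions sum to $2\sum_{i}\binom n{2i}$ (or $2\sum_i\binom n{2i+1}$), which equals $2\sum_{j\le t}\binom{n-1}j$ by Pascal's identity. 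Your hoped-for sign pattern ``both $\alpha_k\pm\beta_k>0$ for $k\le t$ and both $<0$ for $k>t$'' is thus not the pattern that actually occurs in the paper's choice, and you would still need to exhibit coefficients realizing yours.
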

Both inequalities are sharp. When $d=2t$, the upper bound is attained by a Hamming ball of radius $d/2$, for instance, a collection of binary vectors having at most $t$ $1$-coordinates. For $d=2t+1$, an optimal example is given by the Cartesian product of $\{0,1\}$ and the $(n-1)$-dimensional Hamming ball of radius $t$, or alternatively, a Hamming ball of radius $d/2$ centered at $(1/2, 0, \cdots, 0)$. Kleitman's Theorem can be reduced to the celebrated Katona intersection theorem \cite{katona}, using the squashing operation (see \cite{a_katona} or \cite{frankl-tokushige}). Generalizations of this theorem have been studied for the $n$-dimensional grid $[m]^n$ with Hamming distance \cite{a_kh, frankl-furedi}; as well as $[m]^n$ and the $n$-dimensional torus $\mathbb{Z}_m^n$ with Manhattan distance \cite{acz, bollobas-leader, du-kleitman}.

In this paper, we give a new proof of Kleitman's Theorem, based on the following bound by Cvetkovi\'c: the independence number $\alpha(G)$ is bounded from above by the number of non-negative (resp. non-positive) eigenvalues of the adjacency matrix of $G$. This result, when extended to pseudo-adjacency matrices, is still correct. A careful choice of a proper pseudo-adjacency matrix would lead to an algebraic proof of Kleitman's isodiametric theorem. This method also allows us to prove a similar upper bound, when the allowed Hamming distances between two vectors is in a set of consecutive integers. In particular, we show the following new estimate.

\begin{theorem}\label{thm_st}
For given integers $t > s \ge 0$, suppose $\mathcal{F}$ is a collection of binary vectors in $\{0, 1\}^n$, such that for every $\vec{x}, \vec{y} \in \mathcal{F}$, $d(\vec{x}, \vec{y}) \in \mathcal{L}$, with $\mathcal{L}=\{2s+1, \cdots, 2t\}$, then for $n$ sufficiently large, 
$$|\mathcal{F}| \le \binom{n}{t-s}+\binom{n}{t-s+1}+ \cdots + \binom{n}{0}.$$
Similarly, if $\mathcal{L} = \{2s+1, \cdots, 2t+1\}$, then $|\mathcal{F}| \le (2+o(1))\binom{n}{t-s}$.
\end{theorem}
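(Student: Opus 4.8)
The plan is to view $\mathcal{F}$ as an independent set and to apply the Cvetkovi\'c inertia bound (in its extension to pseudo-adjacency matrices) to a carefully chosen matrix in the algebra generated by the distance matrices of the Hamming scheme. Let $G$ be the graph on $\{0,1\}^n$ with $\vec{x}\sim\vec{y}$ if and only if $d(\vec{x},\vec{y})\in\{1,\dots,2s\}\cup\{2t+1,\dots,n\}$; a family with all pairwise distances in $\mathcal{L}=\{2s+1,\dots,2t\}$ is exactly an independent set in $G$. Let $A_0=I,A_1,\dots,A_n$ be the distance matrices, so that $A_j$ acts on the $k$-th common eigenspace (dimension $\binom{n}{k}$, $0\le k\le n$) as the Krawtchouk value $K_j(k)$. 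For any $f\colon\{0,\dots,n\}\to\mathbb{R}$ vanishing on $\{0\}\cup\mathcal{L}$, the matrix $B=\sum_j f(j)A_j$ is a zero-diagonal pseudo-adjacency matrix for $G$ with eigenvalues $\widehat f(k)=\sum_j f(j)K_j(k)$ of multiplicity $\binom{n}{k}$; hence $|\mathcal{F}|\le\alpha(G)\le\sum_{k:\,\widehat f(k)\ge 0}\binom{n}{k}$, and it suffices to construct such an $f$ with $\widehat f(k)\ge 0$ precisely for $0\le k\le t-s$.

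Split $f=f_{\mathrm{low}}+f_{\mathrm{high}}$, with $f_{\mathrm{low}}$ supported on $\{1,\dots,2s\}$ and $f_{\mathrm{high}}$ on $\{2t+1,\dots,n\}$. Then $\widehat{f_{\mathrm{high}}}$ ranges over $\mathrm{span}\{K_m:m\ge 2t+1\}$ and $q:=\widehat{f_{\mathrm{low}}}$ ranges over polynomials of degree $\le 2s$ with $\sum_k\binom{n}{k}q(k)=0$. The target is then: choose these so that $\widehat f=\widehat{f_{\mathrm{high}}}+q$ is $\ge 0$ on $\{0,\dots,t-s\}$, $<0$ on the $s$ extra values $\{t-s+1,\dots,t\}$, and $<0$ on $\{t+1,\dots,n\}$. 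A natural starting point is to take for $\widehat{f_{\mathrm{high}}}$ the eigenvalue function produced by our proof of Kleitman's Theorem for diameter $2t$, which is already nonnegative exactly on $\{0,\dots,t\}$; the point of the block $\{2s+1,\dots,2t\}$ (as opposed to $\{1,\dots,2t\}$) is precisely that the small distances $1,\dots,2s$ are now free to carry a correction $q$, whose task is to push the sign change down from $t$ to $t-s$.

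I expect the main obstacle to be quantitative, and this is where ``$n$ sufficiently large'' is used. On $\{t-s+1,\dots,t\}$ one needs $q$ sufficiently negative, whereas on $\{0,\dots,t-s\}$ and $\{t+1,\dots,n\}$ one only needs $q$ to be dominated in absolute value by $\widehat{f_{\mathrm{high}}}$; but since $q$ has bounded degree, its binomially weighted average vanishes, forcing $q$ to be \emph{positive} on a bulk of values near $k\approx n/2$. One must then check that on this bulk $|\widehat{f_{\mathrm{high}}}(k)|$ --- which involves Krawtchouk polynomials $K_j(k)$ of degree up to $n$ --- dominates the fixed-degree function $q$; for $n$ large the high-degree part outgrows any bounded-degree correction on the middle range, so a suitable $q$ exists. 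The remaining conditions (nonnegativity on $\{0,\dots,t-s\}$, and negativity at the finitely many values near $t+1$ and near $n$ where $\widehat{f_{\mathrm{high}}}$ is only polynomially large) amount to $O_{s,t}(1)$ linear inequalities on the $2s$-dimensional space of admissible $q$, so a short dimension-counting argument pins down the coefficients.

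For the second statement, with $\mathcal{L}=\{2s+1,\dots,2t+1\}$, I would rerun the argument while targeting a sign pattern symmetric under $k\mapsto n-k$ --- ensured by allowing only even distances in the support of $B$, which makes $\widehat f$ automatically invariant under $k\mapsto n-k$ --- so that $\widehat f(k)\ge 0$ exactly on $\{0,\dots,t-s\}\cup\{n-(t-s),\dots,n\}$. This yields $|\mathcal{F}|\le 2\sum_{k=0}^{t-s}\binom{n}{k}=(2+o(1))\binom{n}{t-s}$, the $o(1)$ absorbing the lower-order binomial terms, in parallel with the odd-diameter case of Kleitman's Theorem.
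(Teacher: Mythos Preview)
Your framework (independent set in $G$, Cvetkovi\'c bound applied to $B=\sum_j f(j)A_j$) matches the paper, but the construction of $f$ has a genuine gap. You assert that the Kleitman eigenvalue function $\widehat{f_{\mathrm{high}}}$ is ``nonnegative exactly on $\{0,\ldots,t\}$'' and then plan to push the sign change from $t$ down to $t-s$ with a degree-$\le 2s$ correction $q$. That is not the sign pattern actually produced in the proof of Theorem~\ref{thm_kleitman}: there one has $(-1)^i\lambda_i>0$ for $0\le i\le t$, $\lambda_i=(-1)^{t+1}$ on the bulk $t+1\le i\le n-t$, and $\lambda_{n-i}=\lambda_{i+1}$ for $0\le i\le t-1$. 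So the signs \emph{alternate} on both tails, and the Kleitman count $\sum_{i\le t}\binom{n}{i}$ arises only because the two tails interleave. Worse for your plan, the tail values are exponentially large in $n$ (for instance $|\lambda_t|=2^{n-2t}-1$, the constant term of $(-1)^{t+1}(1+x)^{n-2t}(1-x)^{-1}$), while on the bulk $\widehat{f_{\mathrm{high}}}$ is the constant $\pm 1$. Any $q\in\mathrm{span}(K_1,\ldots,K_{2s})$ large enough to flip the sign at $k=t$ will then be exponentially large on the bulk as well and will swamp the $O(1)$ value of $\widehat{f_{\mathrm{high}}}$ there; your heuristic that ``the high-degree part outgrows any bounded-degree correction on the middle range'' is exactly backwards in this situation. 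The final dimension-counting step cannot repair this, since the obstruction is a magnitude mismatch, not a lack of degrees of freedom.

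The paper does not perturb the $s=0$ weights at all. It takes a single explicit choice $f(2\ell+1)=f(2\ell+2)=\binom{\ell-s}{t-s}$, with the binomial extended to negative upper argument so that $f$ vanishes precisely on $\{2s+1,\ldots,2t\}$. The generating function $\sum_{k\ge 1}f(k)x^{-k}$ is summed in closed form (via an inductive identity for $g_s(x)=\sum_{k\ge 0}\binom{k-s}{t-s}x^k$), and one reads off directly that $\lambda_i=(-1)^{t-s+1}\binom{t}{s}$ for all $t-s+1\le i\le n-(t-s)$. This already gives the bound with factors of $2$; the sharper bound for large $n$ then follows from the same alternating-tail analysis as in the $s=0$ case. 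The endpoint $\mathcal{L}=\{2s+1,\ldots,2t+1\}$ is handled by the parallel choice of $f$ supported on even $k$, exactly as in the odd-diameter Kleitman proof, and your description of that part is in line with what the paper does.
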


Subsets of the hypercube with Hamming distances in a prescribed set of consecutive integers appear in the coding theory literature in the regime when $s$ and $t$ are linear in $n$, for instance in the context of $\epsilon$-balanced codes (of length $n$). These are subsets $\mathcal{F}$ of $\left\{0,1\right\}^{n}$ with pairwise Hamming distances between $\frac{1-\epsilon}{2} n$ and $\frac{1+\epsilon}{2}n$. By mapping them to vectors on the unit sphere in $\mathbb{R}^{n}$ via
$$(v_{1},\ldots,v_{n}) \mapsto \frac{1}{\sqrt{n}} \cdot ( (-1)^{v_{1}},(-1)^{v_{2}},\ldots,(-1)^{v_{n}} ) \in \left\{- \frac{1}{\sqrt{n}},\frac{1}{\sqrt{n}}\right\}^{n},$$
one can easily note that in this case estimating $|\mathcal{F}|$ amounts to estimating the length of a certain spherical code, for which other methods are useful. We refer to \cite{alon} for more details. For our general range (in particular when $s$ and $t$ are small compared to $n$), the problem of upper bounding $|\mathcal{F}|$ is of a different nature, and results about spherical codes do not apply.

In Section \ref{sec_ext}, we discuss several such extensions of Kleitman's Theorem to other distance sets. A number of other techniques including the Croot-Lev-Pach lemma establishes asymptotically sharp bounds for these problems.  

In Section \ref{sec_intersective}, we consider a somewhat different extremal set theory problem. A set $H \subset \mathbb{Z}^+$ is {\it intersective} if whenever $A$ is a subset of positive upper density of $\mathbb{Z}$, we have $(A-A) \cap H \neq \emptyset$. In the late 1970s, S\'ark\"ozy \cite{sarkozy}, and independently Furstenberg \cite{furstenberg1, furstenberg2}, proved that the set of perfect squares is intersective. A quantitative version has also been considered. Denote by $D(H, N)$ the maximum size of a subset $A \subset \{1, \cdots, N\}$ such that $(A-A) \cap H =\emptyset$. It is not hard to see that a set is intersective if and only if $D(H, N)=o(N)$. For many intersective $H$, the asymptotics of $D(H, N)$ have been studied, here we refer the readers to a survey of L\^e \cite{lth}. One particular interesting extremal problem is the analogue of this notion on vector spaces over finite fields. Consider a $N$-dimensional lattice $\mathbb{F}_p^N$, with $\mathbb{F}_p$ being the finite field of $p$ elements for some prime $p$. Let $J=\{0, 1\}^N$ and $D_{\mathbb{F}_p}(J, N)$ be the maximum cardinality of $H \subset \mathbb{F}_p^N$ such that $(H-H) \cap J= \emptyset$. An use of Sperner's Theorem shows that $D_{\mathbb{F}_p}(J, N)=o(p^N)$ and Alon (see \cite{lth}) proved the following bounds (the second inequality being an instance of the polynomial method):
$$ \frac{(p-1)^N}{p\sqrt{N}} \ll D_{\mathbb{F}_p}(J, N) \le (p-1)^N.$$
In this paper we also use the spectral method to slightly improve this upper bound.
\begin{theorem}\label{thm_intersective}
$$D_{\mathbb{F}_p}(J, N) \le \left(1-\frac{1}{2}\left(1-\frac{1}{p-1}\right)^p\right)(p-1)^N.$$
\end{theorem}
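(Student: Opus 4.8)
The plan is to identify $D_{\mathbb{F}_p}(J,N)$ with an independence number and then apply the Cvetkovi\'c bound to a carefully weighted pseudo-adjacency matrix. Write $\omega=e^{2\pi i/p}$, and let $G$ be the Cayley graph on the additive group $\mathbb{F}_p^N$ with connection set $S=\big(\{0,1\}^N\cup\{0,p-1\}^N\big)\setminus\{0\}$; since $-\{0,1\}^N=\{0,p-1\}^N$ in $\mathbb{F}_p^N$, a subset $H$ has $h-h'\notin\{0,1\}^N$ for all distinct $h,h'$ if and only if $H$ is independent in $G$, so $D_{\mathbb{F}_p}(J,N)=\alpha(G)$.

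For parameters $\beta\in\mathbb{C}$ and $\gamma\in\mathbb{R}$, put $g(0)=1$, $g(1)=\beta$, and define a Hermitian matrix $B$ on $\mathbb{F}_p^N$ by $B_{x,x}=\gamma$, by $B_{x,y}=\prod_{i=1}^N g(x_i-y_i)$ when $x-y\in\{0,1\}^N\setminus\{0\}$, by $B_{x,y}=\overline{\prod_{i=1}^N g(y_i-x_i)}$ when $x-y\in\{0,p-1\}^N\setminus\{0\}$, and $B_{x,y}=0$ otherwise. Then $B$ vanishes off the edges of $G$, so it is a legitimate pseudo-adjacency matrix, and since $B_{x,y}$ depends only on $x-y$ its eigenvalues are indexed by the characters of $\mathbb{F}_p^N$; a short Fourier computation gives
$$\mu_a=2\,\mathrm{Re}\Big(\prod_{i=1}^N\big(1+\beta\omega^{a_i}\big)\Big)+\gamma-2,\qquad a\in\mathbb{F}_p^N.$$
When $\gamma>0$, any independent $H$ makes $B[H]=\gamma I$ positive definite, so by Cauchy interlacing $B$ has at least $|H|$ positive eigenvalues, whence $\alpha(G)\le n_+(B):=\#\{a:\mu_a>0\}$. (For degenerate ranges one also uses the $\gamma=0$ form $\alpha(G)\le\min\{n_{\ge0}(B),n_{\le0}(B)\}$, and the extra freedom of an overall complex rescaling of $g$, which rotates all the $\mu_a$.)

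Now one must choose $\beta$, $\gamma$ (and the rescaling phase) to make $n_+(B)$ as small as possible. The cleanest illustrative choice is $\beta=-1$ with $\gamma\le2$: since $1-\omega^j=2\sin(\pi j/p)\,e^{i(\pi j/p-\pi/2)}$, the product $\prod_i(1+\beta\omega^{a_i})$ vanishes whenever some $a_i=0$ (so those $a$ contribute $\mu_a=\gamma-2\le0$ and drop out), and one is left with $n_+(B)=\#\{a\in(\mathbb{F}_p^\times)^N:\prod_i 2\sin(\pi a_i/p)\cdot\cos(\tfrac{\pi}{p}\sum_i a_i-\tfrac{N\pi}{2})>1-\tfrac{\gamma}{2}\}$, which already recovers Alon's bound $(p-1)^N$. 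To gain the factor $1-\tfrac12(1-\tfrac1{p-1})^p$ one exhibits at least $\tfrac12(p-2)^p(p-1)^{N-p}$ vectors $a\in(\mathbb{F}_p^\times)^N$ violating the displayed inequality: freeze the last $N-p$ coordinates in any of $(p-1)^{N-p}$ ways, force each of the first $p$ coordinates to avoid one fixed nonzero value (the source of $(p-2)^p=(p-1)^p(1-\tfrac1{p-1})^p$), and note that as the first $p$ coordinates vary their sum runs over at least $2p$ consecutive integers, hence is close to uniform modulo $2p$. Since for any probability measure on $\mathbb{Z}/2p\mathbb{Z}$ some closed half (an arc of $p$ residues) carries at least half the mass, and the two endpoints of the admissible range of the rescaling phase realise two complementary such halves, at least half of these $(p-2)^p(p-1)^{N-p}$ vectors then get $\mu_a\le0$.

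The crux — and the step I expect to be hardest — is precisely this count. The optimal weight $\beta$ depends genuinely on $p$: already for $p=3$, $N=2$ one needs $\beta\approx-3$ rather than $-1$ (a fact easily checked by hand, since $\beta=-1$ with any diagonal shift only yields $n_+(B)=4>3$ there), so one must verify that the admissible family of matrices always contains one attaining the stated bound, which forces a direct, separate analysis of small $p$ and small $N$ — where the interval of attainable sums is too short for the equidistribution heuristic, and where one falls back on the $\gamma=0$ form of the bound. And even in the main range, converting "close to uniform mod $2p$" into the exact constant $\tfrac12(1-\tfrac1{p-1})^p$ requires a roots-of-unity filter: one isolates the main term $\tfrac{(p-1)^p}{2p}$ in each residue class modulo $2p$ and bounds the residual cotangent-type sums, checking that these corrections — which are exponentially small in $p$ — stay within the slack of the bound.
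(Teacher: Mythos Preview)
Your setup is exactly the paper's: the same Cayley graph, the same pseudo-adjacency matrix (your $\beta=-1$, $\gamma=0$ is the paper's choice $M_{\vec u,\vec v}=(-1)^{c(\vec u-\vec v)}$), and the same eigenvalue formula
\[
\mu_{\vec v}\;=\;2\Bigl(\prod_{j=1}^{N}2\sin(\pi v_j/p)\Bigr)\cos\!\Bigl(\tfrac{\pi N}{2}-\tfrac{\pi}{p}\sum_{j}v_j\Bigr)\;-\;2.
\]
Where you diverge from the paper is precisely at the step you flag as hardest, and there you have missed the simple idea that makes it trivial. You propose to argue that the sum $\sum_{j\le p}a_j$ is approximately equidistributed modulo $2p$ and then clean up with a roots-of-unity filter; this is vague, genuinely delicate (for $p=3$ the box $\{1,\dots,p-2\}^{p}$ is a single point, so there is no equidistribution at all), and unnecessary. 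The paper instead exhibits an explicit involution on the box $\{1,\dots,p-2\}^{p}\times\{1,\dots,p-1\}^{N-p}$,
\[
(v_1,\dots,v_N)\;\longmapsto\;(p-1-v_1,\dots,p-1-v_p,\,p-v_{p+1},\dots,p-v_N),
\]
and a one-line computation shows that this map negates the cosine factor. Hence for each orbit at most one member can have $\cos>0$, and therefore at least half of the $(p-2)^{p}(p-1)^{N-p}$ vectors in the box give a negative eigenvalue. No estimates, no error terms, and no freedom in $\beta$ or in an ``overall rescaling phase'' is used. This is the missing ingredient in your proposal.

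Your observation about $p=3$, $N=2$ is correct but misdirected. With $\beta=-1$ one indeed gets $n_{\ge 0}=4$, exceeding the stated bound $15/4$; however this is not evidence that one must tune $\beta$, but simply that the involution argument requires $N\ge p$ (one needs $p$ coordinates to shift by $p-1-v_j$). The paper's proof tacitly assumes $N\ge p$ and does not discuss smaller $N$; your attempt to repair this via a different $\beta$ and separate small-case analysis is not the intended route and is left unjustified in your write-up.
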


\section{An algebraic proof of Kleitman's Theorem}
To put things into modern context, we start this section with an algebraic argument that comes very close to proving Theorem \ref{thm_kleitman}, but only ends up giving a weaker bound. The main idea is to use the following lemma of Croot, Lev, and Pach \cite{clp}.

\begin{lemma}\label{lem_clp}
Let $P \in \mathbb{F}_2[x_1, \cdots, x_n]$ be a multilinear polynomial of degree at most $d$, and let $M$ denote the $2^n \times 2^n$ matrix with entries $M_{\vec{x}, \vec{y}}=P(\vec{x}+\vec{y})$ for $\vec{x}, \vec{y} \in \mathbb{F}_2^n$. Then 
$$\operatorname{rank}_{\mathbb{F}_2}(M) \le 2 \sum_{i=0}^{\lfloor d/2 \rfloor} \binom{n}{i}.$$ 
\end{lemma}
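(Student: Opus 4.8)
The plan is to reproduce the original argument of Croot, Lev, and Pach, whose crux is that after expanding $P(\vec{x}+\vec{y})$ as a polynomial in the $2n$ variables $x_1,\dots,x_n,y_1,\dots,y_n$, every monomial that occurs has small degree in at least one of the two blocks of variables. First I would write $P$ in the multilinear monomial basis as $P(\vec{z}) = \sum_{S \subseteq [n],\,|S|\le d} c_S\, z_S$, where $z_S := \prod_{i\in S} z_i$ and $c_S\in\mathbb{F}_2$. Substituting $z_i = x_i+y_i$ and expanding the product over the coordinates of $S$ gives
$$P(\vec{x}+\vec{y}) \;=\; \sum_{|S|\le d} c_S \prod_{i\in S}(x_i+y_i) \;=\; \sum_{|S|\le d} c_S \sum_{T\subseteq S} x_T\, y_{S\setminus T},$$
where $x_T := \prod_{i\in T} x_i$ and similarly for $y_{S\setminus T}$.

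Now comes the key point. For each term $x_T\, y_{S\setminus T}$ above we have $|T| + |S\setminus T| = |S| \le d$, so $\min(|T|,|S\setminus T|) \le \lfloor d/2\rfloor$. I would split the sum accordingly: collect all terms with $|T|\le\lfloor d/2\rfloor$ and group them by the value of $T$; every remaining term has $|T| \ge \lfloor d/2\rfloor+1$, hence $|S\setminus T|\le \lceil d/2\rceil - 1 \le \lfloor d/2\rfloor$, so group those by the value of $U := S\setminus T$. This yields an identity of the shape
$$P(\vec{x}+\vec{y}) \;=\; \sum_{|T|\le\lfloor d/2\rfloor} x_T\, f_T(\vec{y}) \;+\; \sum_{|U|\le\lfloor d/2\rfloor} g_U(\vec{x})\, y_U,$$
for some multilinear polynomials $f_T, g_U\in\mathbb{F}_2[x_1,\dots,x_n]$ whose exact form is irrelevant.

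To finish, read each of the two sums as a matrix over $\mathbb{F}_2$ indexed by $(\vec{x},\vec{y})\in\mathbb{F}_2^n\times\mathbb{F}_2^n$. For a fixed $T$, the matrix with $(\vec{x},\vec{y})$-entry $x_T\, f_T(\vec{y})$ is the outer product of the column vector $(x_T)_{\vec{x}}$ with the row vector $(f_T(\vec{y}))_{\vec{y}}$, so it has rank at most $1$. There are at most $\sum_{i=0}^{\lfloor d/2\rfloor}\binom{n}{i}$ admissible sets $T$, so by subadditivity of rank the first sum contributes a matrix of rank at most $\sum_{i=0}^{\lfloor d/2\rfloor}\binom{n}{i}$; the second sum is bounded identically, and adding the two bounds gives $\operatorname{rank}_{\mathbb{F}_2}(M) \le 2\sum_{i=0}^{\lfloor d/2\rfloor}\binom{n}{i}$. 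I do not anticipate a genuine obstacle here: the only thing to handle with care is the combinatorial bookkeeping in the splitting step, and it is worth noting that the rank estimate uses nothing special about $\mathbb{F}_2$ — the characteristic-$2$ hypothesis merely makes $x_i+y_i$ the natural "difference" coordinate, which is what matters when the lemma is later applied to Hamming distances.
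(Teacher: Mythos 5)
Your argument is correct and is precisely the standard Croot--Lev--Pach rank decomposition; the paper cites \cite{clp} for this lemma rather than reproducing its proof, so there is nothing in the text to compare against, but this is exactly the intended argument. One small stylistic nit: your auxiliary polynomials $f_T$, $g_U$ are functions of the stated variables only (you wrote $f_T, g_U \in \mathbb{F}_2[x_1,\dots,x_n]$, but $f_T$ is a polynomial in $\vec{y}$), though this does not affect the proof.
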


This was the main ingredient in the recent cap-set problem breakthrough \cite{eg} and the driving force behind many recent developments in additive combinatorics. We refer the reader to \cite{clp} for the original application for which it was developed and to \cite{ls} for a better account of its recent history and a comprehensive list of references.

For this incipient discussion, we only address the case $d=2t$. We enumerate the elements of $\mathbb{F}_{2}^{n}$ and consider the $2^{n} \times 2^{n}$ matrix $M$ defined by
$$M_{\vec{x}, \vec{y}} = {d(\vec{x},\vec{y})-1 \choose 2t}:=\frac{(d(\vec{x},\vec{y})-1) \cdots (d(\vec{x},\vec{y})-2t)}{(2t)!}$$
for every $\vec{x}, \vec{y} \in \mathbb{F}_{2}^{n}$. Let $M'$ denote the $2^{n} \times 2^{n}$ binary matrix obtained from $M$ by reducing each element modulo $2$. Note that every two distinct vectors $\vec{x}, \vec{y} \in \mathcal{F}$ has Hamming distance in $\{1, \cdots, 2t\}$, and $\binom{z-1}{2t}$ equals $0$ for $z \in \{1, \cdots, 2t\}$, and non-zero for $z=0$. Therefore the matrix $M'$ restricts on $\mathcal{F} \times \mathcal{F}$ to a full-rank sub-matrix, and thus $\operatorname{rank} M \geq \operatorname{rank} M' \geq |\mathcal{F}|$. On the other hand, there's a polynomial $p \in \mathbb{F}_2[t_1,..,t_n]$ with $\deg p \leq 2t$ so that 
$$p(\vec{x}-\vec{y}) = {d(\vec{x}, \vec{y})-1 \choose 2t} \mod 2, \ \ \ \text{for every $\vec{x}, \vec{y} \in \mathbb{F}_{2}^{n}$}$$ 
This polynomial is given explicitly by 
$$p(t_{1},\ldots,t_{n}) = \sum_{S \subset \left\{1,..,n\right\}, |S| \leq k} \prod_{i \in S} t_{i}.$$
Indeed, note that for every $x,y \in \mathbb{F}_{2}^{n}$,
$${d(\vec{x}, \vec{y})-1 \choose 2t} = \sum_{\ell=0}^{2t} (-1)^{\ell} {d(\vec{x}, \vec{y}) \choose \ell}.$$
Furthermore, in $\mathbb{F}_{2}$ we also have that
$$\sum_{|S| = \ell} {\prod_{i \in S} (x_i - y_i)} = {d(\vec{x}, \vec{y}) \choose \ell},$$
so
$${d(\vec{x}, \vec{y})-1 \choose 2t} = \sum_{S \subset \{1,..,n\}, |S| \leq 2t} (-1)^{|S|} \prod_{i \in S} (x_i - y_i) = \sum_{S \subset \{1,..,n\}, |S| \leq 2t} \prod_{i \in S} (x_i - y_i),$$
as claimed. Lemma \ref{lem_clp} then immediately implies
$$|\mathcal{F}| \leq 2\sum_{i=0}^{t} {n \choose i}.$$
When $d=2t+1$, it is not to hard to adapt the above argument to show that
$$|\mathcal{F}| \leq 4\left(\binom{n-1}{0}+ \cdots + \binom{n-1}{t}\right).$$

One can however improve on this rank argument and establish the precise version of Theorem \ref{thm_kleitman}. In fact, we will prove Theorem \ref{thm_st}, but to keep things simple for the rest of this section we will stick to the case when $s=0$ which recovers Theorem \ref{thm_kleitman}. We start with a few lemmas involving simple linear algebra. 

Let $M_{n, k}$ be a $2^n \times 2^n$ matrix, whose rows and columns are indexed by vectors in $\{0, 1\}^n$. The $(\vec{x}, \vec{y})$-th entry of $M_{n, k}$ is equal to $1$ if and only if $\vec{x}$ and $\vec{y}$ differ in exactly $k$ coordinates, and $0$ otherwise. For example, $M_{n, 1}$ is the adjacency matrix of the $n$-dimensional hypercube, and $M_{n, k}$ is the adjacency matrix of a Hamming-type graph in which two vertices are adjacent if they are at distance $k$. The following lemma determines the spectrum of all $M_{n, k}$ for all $1 \le k \le n$.
\begin{lemma}\label{lemma_eig1}
The spectrum of $M_{n, k}$ consists of $K_k(i; n)$ with multiplicity $\binom{n}{i}$, for $i=0, \cdots, n$. Here $K_k(i; n)$ is the Krawtchouk polynomial with parameter $2$: 
$$K_k(i; n) = \sum_{j=0}^k (-1)^j \binom{i}{j} \binom{n-i}{k-j}.$$
\end{lemma}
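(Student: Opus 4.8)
The plan is to recognize each $M_{n,k}$ as the adjacency matrix of a Cayley graph on the abelian group $\mathbb{Z}_2^n$, and to diagonalize it using the characters of this group (which in fact simultaneously diagonalize all of the matrices $M_{n,0},\ldots,M_{n,n}$). Write $C_k=\{\vec{z}\in\{0,1\}^n : w(\vec{z})=k\}$ for the set of binary vectors of Hamming weight $k$, where $w(\cdot)$ denotes the Hamming weight. Then $(M_{n,k})_{\vec{x},\vec{y}}=1$ exactly when $\vec{x}+\vec{y}\in C_k$ (addition in $\mathbb{Z}_2^n$), so $M_{n,k}$ is the Cayley graph matrix of $\mathbb{Z}_2^n$ with connection set $C_k$; since $-\vec{z}=\vec{z}$ in $\mathbb{Z}_2^n$ this set is symmetric and $M_{n,k}$ is a genuine symmetric adjacency matrix.

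For each $\vec{a}\in\{0,1\}^n$, consider the character $\chi_{\vec{a}}(\vec{x})=(-1)^{\langle\vec{a},\vec{x}\rangle}$, viewed as a vector in $\mathbb{R}^{2^n}$. These $2^n$ vectors are pairwise orthogonal, hence form a basis of $\mathbb{R}^{2^n}$. Substituting $\vec{z}=\vec{x}+\vec{y}$ and using bilinearity of $\langle\cdot,\cdot\rangle$ modulo $2$, one computes
$$(M_{n,k}\chi_{\vec{a}})(\vec{x}) = \sum_{\vec{y}:\,\vec{x}+\vec{y}\in C_k}(-1)^{\langle\vec{a},\vec{y}\rangle} = \Bigl(\sum_{\vec{z}\in C_k}(-1)^{\langle\vec{a},\vec{z}\rangle}\Bigr)(-1)^{\langle\vec{a},\vec{x}\rangle},$$
so $\chi_{\vec{a}}$ is an eigenvector of $M_{n,k}$ with eigenvalue $\lambda_{\vec{a}}=\sum_{\vec{z}\in C_k}(-1)^{\langle\vec{a},\vec{z}\rangle}$.

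Finally I would evaluate this character sum combinatorially. If $w(\vec{a})=i$, then a vector $\vec{z}\in C_k$ is obtained by selecting some $j$ of the $i$ coordinates where $\vec{a}$ is $1$ and some $k-j$ of the $n-i$ coordinates where $\vec{a}$ is $0$, contributing the sign $(-1)^j$; summing over $j$ gives $\lambda_{\vec{a}}=\sum_{j=0}^k(-1)^j\binom{i}{j}\binom{n-i}{k-j}=K_k(i;n)$, which depends only on $i=w(\vec{a})$. Since there are exactly $\binom{n}{i}$ vectors of weight $i$ and $\sum_{i=0}^n\binom{n}{i}=2^n$, this exhibits the full spectrum with the claimed multiplicities. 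There is no serious obstacle in this argument; the only points requiring a little care are the combinatorial identification of the character sum with the Krawtchouk polynomial $K_k(i;n)$, and the (harmless) observation that distinct values of $i$ may a priori produce numerically equal eigenvalues, which does not affect the statement as phrased. One could alternatively deduce the result from the fact that the Hamming scheme is metric, so that $M_{n,k}$ is a degree-$k$ polynomial in $M_{n,1}$ with its classical eigenvalues $n-2i$, but the character computation above is more direct.
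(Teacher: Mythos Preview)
Your proof is correct and is essentially the same as the paper's: both diagonalize $M_{n,k}$ via the characters $\chi_{\vec a}(\vec x)=(-1)^{\langle\vec a,\vec x\rangle}$ of $\mathbb{Z}_2^n$ (written in the paper as $(\vec v_S)_T=(-1)^{|S\cap T|}$) and identify the resulting character sum with $K_k(i;n)$ by the same combinatorial count. Your substitution $\vec z=\vec x+\vec y$ makes the sign bookkeeping a touch cleaner than the paper's identity $|S\cap U|=|S\cap T|+|(T\Delta U)\cap S|-2|S\cap T\cap\overline U|$, but the argument is otherwise identical.
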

For example, when $k=1$, it is easy to check that the eigenvalues of the $n$-dimensional hypercube are $K_1(i; n)=n-2i$ with multiplicity $\binom{n}{i}$. The lemma can be found in \cite{lint-wilson} (Theorem 30.1). For completeness, we include its proof below using the Fourier transform on hypercubes as eigenvectors. Throughout the proof we use the notation $d(U, V)$ for the Hamming distance between the indicator vectors of $U$ and $V$, for two subsets $U, V \subset [n]$.

\begin{proof}
Let $\vec{v}_S$ be a vector in $\mathbb{R}^{2^n}$ defined as (with its $2^n$ coordinates viewed as subsets of $[n]$):
$$(\vec{v}_S)_T=(-1)^{|S \cap T|}.$$
It is not hard to show that $\{\vec{v}_S\}_{S \subset [n]}$ form an orthogonal basis. On the other hand,
\begin{align*}
(M_{n, k} \vec{v}_S)_T&=\sum_{U \subset [n]} (M_{n, k})_{T, U} (\vec{v}_S)_U=\sum_{U: d(U, T)=k} (-1)^{|S \cap U|}.
\end{align*}
Note that the number of sets $U$ with the property that $U$ and $T$ differ in $j$ coordinates in $S$ is equal to $\binom{|S|}{j} \binom{n-|S|}{k-j}$. For each of such $U$, 
$$(-1)^{|S \cap U|}=(-1)^{|S \cap T|} \cdot (-1)^j,$$
since $|S \cap U|=|S \cap T|+|(T \Delta U) \cap S|-2|S \cap T \cap \overline{U}|$. Therefore
\begin{align*}
(M_{n, k} \vec{v}_S)_T=(-1)^{|S \cap T|} \cdot \sum_{j=0}^{|S|} (-1)^{j} \binom{|S|}{j} \binom{n-|S|}{k-j} =K_{k}(|S|; n) (\vec{v}_S)_T.
\end{align*}
This immediately shows that $K_k(i; n)$ are eigenvalues of $M_{n, k}$ with multiplicity $\binom{n}{i}$.
\end{proof}
From the proof of Lemma \ref{lemma_eig1}, observe that for fixed $n$, the eigenspace decomposition of $M_{n, k}$ is the same for every $k$. Hence it is straightforward to establish the following result.
\begin{lemma}\label{lemma_eig2}
Suppose $f(1), \cdots, f(n)$ is a sequence of real numbers and let $M=\sum_{k=1}^n f(k) M_{n, k}$. Then the spectrum of $M$ consists of 
$$\lambda_i=\sum_{k=1}^n f(k) K_k(i; n)$$
with multiplicity $\binom{n}{i}$, for $i=0, \cdots, n.$
\end{lemma}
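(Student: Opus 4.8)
The plan is to observe that Lemma~\ref{lemma_eig1}, or rather its proof, gives us much more than the spectrum of an individual $M_{n,k}$: it exhibits an \emph{explicit common eigenbasis} for all of the matrices $M_{n,1}, \ldots, M_{n,n}$ at once. Concretely, the computation in that proof shows that for every subset $S \subset [n]$ and every $1 \le k \le n$,
$$M_{n,k}\, \vec{v}_S = K_k(|S|; n)\, \vec{v}_S,$$
with the crucial point that the vector $\vec{v}_S$ does not depend on $k$, and the eigenvalue $K_k(|S|;n)$ depends on $S$ only through its size $|S|$. So $\{\vec{v}_S\}_{S \subset [n]}$ is a single orthogonal basis of $\R^{2^n}$ that simultaneously diagonalizes every $M_{n,k}$.

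Given this, the lemma is immediate by linearity. First I would fix $S \subset [n]$ and compute
$$M \vec{v}_S \;=\; \sum_{k=1}^n f(k)\, M_{n,k} \vec{v}_S \;=\; \sum_{k=1}^n f(k)\, K_k(|S|;n)\, \vec{v}_S \;=\; \lambda_{|S|}\, \vec{v}_S,$$
so that each $\vec{v}_S$ is an eigenvector of $M$ with eigenvalue $\lambda_{|S|} = \sum_{k=1}^n f(k) K_k(i;n)$ where $i = |S|$. Next I would note that the $2^n$ vectors $\vec{v}_S$, being pairwise orthogonal and nonzero, form a basis of $\R^{2^n}$, so they account for the entire spectrum of $M$ with no eigenvalues left over. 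Finally, for each $i \in \{0, \ldots, n\}$ there are exactly $\binom{n}{i}$ subsets $S$ of size $i$, each contributing the eigenvalue $\lambda_i$; since $\sum_{i=0}^n \binom{n}{i} = 2^n$ matches the dimension, the eigenvalue $\lambda_i$ occurs with multiplicity $\binom{n}{i}$, as claimed.

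There is essentially no obstacle here — the statement is a direct corollary of Lemma~\ref{lemma_eig1} once one extracts from its proof that the eigenspace decomposition is $k$-independent, which is exactly the remark made just before the lemma. The only point requiring a word of care is purely cosmetic: if two indices $i \ne i'$ happen to yield $\lambda_i = \lambda_{i'}$, then that common value has multiplicity $\binom{n}{i} + \binom{n}{i'}$ rather than either term alone; but this does not contradict the statement, which simply lists the eigenvalue–multiplicity pairs $(\lambda_i, \binom{n}{i})$ without asserting the $\lambda_i$ are distinct.
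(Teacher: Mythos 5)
Your proof is correct and follows exactly the route the paper intends: the paper states Lemma~\ref{lemma_eig2} as an immediate consequence of the observation that the eigenvectors $\vec{v}_S$ from the proof of Lemma~\ref{lemma_eig1} form a common eigenbasis for all $M_{n,k}$, and you simply spell out the linearity and multiplicity-counting steps that the paper leaves implicit.
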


The following well-known theorem studies the relation between the spectrum of a symmetric matrix and that of its principal minor.
\begin{lemma} (Cauchy's Interlacing Theorem)
Let $A$ be a symmetric matrix of size $n$, and $B$ is a principal minor of $A$ of size $m \le n$. Suppose the eigenvalues of $A$ are $\lambda_1 \ge \lambda_2 \ge \cdots \ge \lambda_n$, and the eigenvalues of $B$ are $\mu_1 \ge \cdots \ge \mu_m$. Then for $1 \le i \le m$, we have
$$\lambda_{i+n-m} \le \mu_i \le \lambda_i.$$
\end{lemma}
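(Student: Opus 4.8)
The plan is to derive both interlacing inequalities from the Courant--Fischer min-max characterization of the eigenvalues of a symmetric matrix, which I would take as known. Recall that if $A$ is symmetric of size $n$ with eigenvalues $\lambda_1 \ge \cdots \ge \lambda_n$, then for every $1 \le i \le n$,
$$
\lambda_i = \max_{\substack{V \subseteq \R^n \\ \dim V = i}}\ \min_{\substack{x \in V \\ x \ne 0}} \frac{x^T A x}{x^T x}
= \min_{\substack{V \subseteq \R^n \\ \dim V = n-i+1}}\ \max_{\substack{x \in V \\ x \ne 0}} \frac{x^T A x}{x^T x},
$$
where $V$ ranges over subspaces of $\R^n$. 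Both formulas will be used: the first for the upper bound $\mu_i \le \lambda_i$, the (dual) second for the lower bound.

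After relabelling coordinates, I may assume $B$ is the submatrix of $A$ on the index set $\{1, \dots, m\}$. Set $S = \{x \in \R^n : x_{m+1} = \cdots = x_n = 0\}$, an $m$-dimensional subspace of $\R^n$ that I identify with $\R^m$ via the first $m$ coordinates $x \mapsto \hat x$. The key (and essentially only) computation is that for $x \in S$ one has $x^T A x = \hat x^T B \hat x$ and $x^T x = \hat x^T \hat x$, since all the terms involving the zeroed coordinates vanish. Consequently the two Courant--Fischer formulas for $B$ are exactly the displayed ones, except that $V$ is now constrained to lie inside $S$ (and its dimensions run from $0$ to $m$, with the index $n-i+1$ replaced by $m-i+1$).

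Now both inequalities fall out by comparing optimization over a sub-family with optimization over the full family. For the upper bound: every $i$-dimensional subspace of $S$ is an $i$-dimensional subspace of $\R^n$, so the outer maximum defining $\mu_i$ is over a sub-collection of the subspaces defining $\lambda_i$, whence $\mu_i \le \lambda_i$. For the lower bound, use the dual formula: $\mu_i = \min \max$ of the Rayleigh quotient of $A$ over subspaces $W \subseteq S$ with $\dim W = m-i+1$; enlarging the collection of competitors to all $(m-i+1)$-dimensional subspaces of $\R^n$ can only decrease the minimum, so $\mu_i \ge \min \max$ over such subspaces of $\R^n$, which by Courant--Fischer equals $\lambda_{n-(m-i+1)+1} = \lambda_{i+n-m}$.

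There is no genuine obstacle here; the only points demanding care are the bookkeeping of the index shift $n-i+1 \leadsto m-i+1$ between the two min-max formulas, and the verification that the Rayleigh quotient of $A$ restricts to that of $B$ on the coordinate subspace $S$. If one wishes to avoid invoking Courant--Fischer, an equally short route is to establish the case $m = n-1$ directly---interlacing under deletion of a single row and column, which follows from a dimension count on eigenspaces (if $v$ is orthogonal to the top $i$ eigenvectors of $A$ and to $e_n$, then its truncation lies in an $(i+1)$-codimensional space, forcing $\mu_{i+1} \le \lambda_{i+1}$, and symmetrically from below)---and then iterate this $n-m$ times, each step shifting the relevant index by one.
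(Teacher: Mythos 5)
Your proof is correct. Note, however, that the paper does not actually prove Cauchy's Interlacing Theorem --- it cites it as a known result (only the Corollary \ref{cor} that follows is proved), so there is no in-paper argument to compare against. Your derivation from the Courant--Fischer min-max principle is the standard one: identify the coordinate subspace $S$ with $\mathbb{R}^m$, check that the Rayleigh quotient of $A$ restricts on $S$ to that of $B$, and then observe that for the upper bound $\mu_i$ is a maximum over a sub-collection of the $i$-dimensional subspaces used for $\lambda_i$, while for the lower bound $\mu_i$ is a minimum over a sub-collection of the $(m-i+1)$-dimensional subspaces, so enlarging the family gives $\mu_i \ge \lambda_{n-(m-i+1)+1} = \lambda_{n-m+i}$. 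The index bookkeeping is handled correctly. The alternative you sketch (establish the one-row-deletion case $m = n-1$ directly and iterate $n-m$ times) is also a valid and standard route.
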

The following corollary of the Cauchy's Interlacing Theorem was discovered earlier by Cvetkovi\'c \cite{cvetkovic}. It provides a useful technique to bound the independence number of a graph.
\begin{corollary}\label{cor}
Let $G$ be a $n$-vertex graph, and $M$ be a symmetric $n \times n$ matrix such that $M_{ij}=0$ whenever $ij \not\in E(G)$ (such $M$ is often called a {\it pseudo-adjacency matrix} of $G$). Let $n_{\le 0}(M)$ (resp. $n_{\ge 0}(M)$) be the number of non-positive (resp. non-negative) eigenvalues of $M$.  Then the independence number of $G$ satisfies 
$$\alpha(G) \le \min \{n_{\le 0}(M), n_{\ge 0}(M) \}$$
\end{corollary}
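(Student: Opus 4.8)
The plan is to combine Cauchy's Interlacing Theorem with the single structural observation that the principal submatrix of $M$ supported on an independent set is identically zero. First I would let $I \subseteq V(G)$ be a maximum independent set, so that $|I| = \alpha(G) =: m$, and let $B$ be the $m \times m$ principal submatrix of $M$ whose rows and columns are indexed by the vertices of $I$. For distinct $i, j \in I$ the pair $ij$ is a non-edge of $G$, so $B_{ij} = M_{ij} = 0$; and since $G$ has no loops, $ii \notin E(G)$ for every vertex $i$, whence $B_{ii} = M_{ii} = 0$ as well. Thus $B$ is the $m \times m$ zero matrix and all $m$ of its eigenvalues are equal to $0$.

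Next I would feed this into Cauchy's Interlacing Theorem. Write the eigenvalues of $M$ as $\lambda_1 \ge \lambda_2 \ge \cdots \ge \lambda_n$ and those of $B$ as $\mu_1 \ge \cdots \ge \mu_m = 0$, with $\mu_1 = \cdots = \mu_m = 0$ by the previous paragraph. Interlacing yields $\lambda_{i+n-m} \le \mu_i \le \lambda_i$ for every $1 \le i \le m$. The upper inequalities give $\lambda_i \ge \mu_i = 0$ for $i = 1, \ldots, m$, so $M$ has at least $m$ non-negative eigenvalues, i.e. $n_{\ge 0}(M) \ge m = \alpha(G)$. The lower inequalities give $\lambda_{i+n-m} \le \mu_i = 0$ for $i = 1, \ldots, m$; as $i$ runs over $1, \ldots, m$ the index $i+n-m$ runs over $n-m+1, \ldots, n$, so $\lambda_{n-m+1}, \ldots, \lambda_n \le 0$ and therefore $n_{\le 0}(M) \ge m = \alpha(G)$. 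Taking the minimum of the two bounds proves the corollary.

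There is essentially no hard step here; the one point that genuinely must not be overlooked is that the hypothesis ``$M_{ij}=0$ whenever $ij \notin E(G)$'' forces the \emph{diagonal} of $M$ to vanish, since a vertex is never adjacent to itself, and it is exactly this that makes $B$ the zero matrix rather than merely a diagonal matrix. Without it the statement is false: for $G$ a single edge one could take the symmetric $2 \times 2$ matrix with both diagonal entries $5$ and both off-diagonal entries $1$, whose eigenvalues $6$ and $4$ are both positive while $\alpha(G) = 1$. In the applications of this paper the relevant matrices are of the form $\sum_{k \ge 1} f(k) M_{n,k}$, which indeed have zero diagonal (each $M_{n,k}$ with $k \ge 1$ does), so the corollary applies directly.
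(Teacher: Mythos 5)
Your proof is correct and follows essentially the same route as the paper: extract the all-zero $\alpha(G)\times\alpha(G)$ principal submatrix corresponding to a maximum independent set and apply Cauchy interlacing on both ends to bound $n_{\ge 0}(M)$ and $n_{\le 0}(M)$ from below by $\alpha(G)$. Your explicit remark that the hypothesis forces the diagonal of $M$ to vanish (since $ii\notin E(G)$) is a worthwhile clarification that the paper leaves implicit.
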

\begin{proof}
Suppose $I$ is a maximum independent set of $G$ with $|I|=\alpha(G)$. Then $I$ naturally corresponds to an all-zero principal minor $B$ of $M$. And the eigenvalues of $B$ are $\mu_1 = \cdots = \mu_{\alpha(G)}=0$. Let $\lambda_1 \ge \cdots \ge \lambda_n$ be the eigenvalues of $M$. By Cauchy's Interlacing Theorem, 
$$0=\mu_{\alpha(G)} \le \lambda_{\alpha(G)}.$$
So $M$ has at least $\alpha(G)$ non-negative eigenvalues. Similarly,
$$\lambda_{1+n-\alpha(G)}\le \mu_{1}=0,$$
which implies that $M$ has at least $\alpha(G)$ non-positive eigenvalues.
\end{proof}
~\\
Now we are ready to prove Theorem \ref{thm_kleitman}.
\begin{proof}[Proof of Theorem \ref{thm_kleitman}]
For given $n, d$, we define a graph $G$ whose vertex set $V(G)=\{0,1\}^n$, and two vertices are adjacent if their Hamming distance is at least $d+1$. Kleitman's problem is now equivalent to determining the independence number $\alpha(G)$.
	
We start with the even case $d=2t$. By Corollary \ref{cor} applied to $G$, it suffices to find real numbers $f(k)$ for $k=2t+1, \cdots, n$ and define $M=\sum_{k=2t+1}^{n} f(k) M_{n, k}$, such that either the number of non-positive or non-negative eigenvalues of $M$ is at most $\sum_{i=0}^t \binom{n}{i}$. 

At this point, it is perhaps important to mention that choosing $f(k) = {k - 1 \choose t}$ recovers the $2^{n} \times 2^{n}$ symmetric matrix $M$ defined by $M_{x,y} = {d(x,y)-1 \choose 2t}$ from the Croot-Lev-Pach approach, however this is {\it{not}} going to be the choice we are going to make for the sequence $f(1),\ldots,f(n)$. We choose $f(k)=\binom{\ell}{t}$ if $k=2\ell+1$ or $k=2\ell+2$. Equivalently $f(k)=\binom{\lfloor (k-1)/2 \rfloor}{t}$. By Lemma \ref{lemma_eig2}, the eigenvalue of $M$ with multiplicity $\binom{n}{i}$ is equal to 
\begin{align*}
\lambda_i=\sum_{k=2t+1}^{n} f(k) \sum_{j=0}^k (-1)^j \binom{i}{j} \binom{n-i}{k-j},
\end{align*}
for $i=0, \cdots, n$. Although computing the exact value of $\lambda_i$'s might not be easy, it turns out that we can determine their signs in a rather straightforward way. We claim that for every $t$, we have 
\begin{itemize}
\item $(-1)^i\lambda_i > 0$ for $i=0, \cdots, t$.
\item $\lambda_{n-i}=\lambda_{i+1}$ for $i=0, \cdots, t-1$.
\item $\lambda_{t+1}=\lambda_{t+2} = \cdots = \lambda_{n-t} = (-1)^{t+1}$.
\end{itemize}
To show the above claims, we use generating functions and observe that $\lambda_i$ is equal to the constant term of the following formal power series:
$$\left(\sum_{k=2t+1}^n f(k)x^{-k} \right)\left(\sum_{j=0}^i(-1)^j \binom{i}{j} x^j\right)\left(\sum_{\ell=0}^{n-i} \binom{n-i}{\ell} x^{\ell}\right).$$
Here in the generating function, we may extend the sum and the domain of $f$ to all the integers greater or equal to $2t+1$, with $f(k)=\binom{\lfloor (k-1)/2 \rfloor}{t}$ as before. This would not affect the constant term since for $\binom{i}{j}$ and $\binom{n-i}{\ell}$ to be non-zero, one must have $j \le i$ and $\ell \le n-i$. So $f(k)$ for only those $k$ up to $n$ may contribute to the constant term.

A quick calculation shows that
\begin{align*}
\sum_{k=2t+1}^{\infty} f(k)x^{-k}&=\binom{t}{t}(x^{-(2t+1)}+x^{-(2t+2)})+ \binom{t+1}{t}(x^{-(2t+3)}+x^{-(2t+4)})+\cdots\\
&=x^{-(2t+1)}(1+x^{-1})\left(\binom{t}{t}+\binom{t+1}{t}x^{-2}+\cdots\right)\\
&=x^{-(2t+1)}(1+x^{-1})(1-x^{-2})^{-(t+1)}\\
&=\frac{x+1}{(x^2-1)^{t+1}},
\end{align*}
in which the power series converges when $|x|>1$.

Note that 
$$\sum_{j=0}^n(-1)^j \binom{i}{j} x^j=(1-x)^i ~~~\textup{and}~~~\sum_{\ell=0}^{n-i} \binom{n-i}{\ell} x^{\ell}=(1+x)^{n-i}.$$
Therefore, $\lambda_i$ is equal to the constant term of the following power series:
\begin{align*}
\frac{x+1}{(x^2-1)^{t+1}} \cdot (1-x)^i (1+x)^{n-i} =(-1)^{t+1} (1+x)^{n-i-t} (1-x)^{i-t-1}.
\end{align*}
For $t+1 \le i \le n-t$, both $n-i-t$ and $i-t-1$ are nonnegative, so the constant term is equal to $(-1)^{t+1}$. For $0 \le i \le t$, one needs to consider the constant term of 
\begin{align*}
(-1)^{t+1}\frac{(1+x)^{n-i-t}}{(1-x)^{t+1-i}} = (-1)^{i}\frac{(1+x)^{n-i-t}}{(x-1)^{t+1-i}}=(-1)^i x^{-(t+1-i)}(1+x)^{n-i-t}(1-\frac{1}{x})^{-(t+1-i)}
\end{align*}
Obviously in the expansion of $(1+x)^{n-i-t}(1-\frac{1}{x})^{-(t+1-i)}$ for $x>1$, all the coefficients are positive. So $(-1)^i \lambda_i$ is positive since $t+1-i\le n-i-t$. For $i>n-t$, note that in a power series, substituting $x$ by $-x$ does not change the constant term. Therefore letting $i=n+1-j$, $\lambda_i$ is equal to  the constant term of 
$$(-1)^{t+1}\frac{(1+x)^{i-(t+1)}}{(1-x)^{i-(n-t)}}=(-1)^{t+1} \frac{(1+x)^{(n-t)-j}}{(1-x)^{(t+1)-j}},$$
which is exactly $\lambda_j=\lambda_{n+1-i}$.

Therefore for even $t=2m$, the only non-negative eigenvalues are $\lambda_0, \lambda_2, \cdots, \lambda_{2m}$, $\lambda_{n-1}, \lambda_{n-3},\\ \cdots, \lambda_{n-(2m-1)}$, and their  multiplicities add up to 
$\sum_{i=0}^t \binom{n}{i}$. Similarly when $t=2m+1$, the only non-positive eigenvalues are 
$\lambda_1, \cdots, \lambda_{2m+1}, \lambda_{n}, \cdots, \lambda_{n-2m}$, and their total multiplicity equals $\sum_{i=0}^t \binom{n}{i}$ as well. This finishes the proof for the even case.

The proof for the odd case $d=2t+1$ works in a similar fashion, except that we have to choose $f(k)$ for $k=2t+2, \cdots, n$ in a slightly different way. Here we define $f(k)=0$ for odd $k$, and $f(k)=\binom{k/2-1}{t}$ for even $k$. By a similar argument, the eigenvalue $\lambda_i$ with multiplicity $\binom{n}{i}$ is equal to the constant term of the following formal sum:
$$\left(\sum_{k=2t+2}^{\infty} f(k)x^{-k} \right)\left(\sum_{j=0}^i(-1)^j \binom{i}{j} x^j\right)\left(\sum_{l=0}^{n-i} \binom{n-i}{l} x^l\right).$$
It is equal to 
\begin{align*}
& ~~~x^{-(2t+2)}\left(\binom{t}{t}+\binom{t+1}{t}x^{-2}+\binom{t+2}{t}x^{-4}+ \cdots \right)(1-x)^i (1+x)^{n-i}\\
&=x^{-(2t+2)}(1-x^{-2})^{-(t+1)}(1-x)^i (1+x)^{n-i}\\
&=(-1)^{t+1}(1-x)^{i-t-1} (1+x)^{n-i-t-1}
\end{align*}
Once again, for $t+1 \le i \le n-t-1$, the constant term equals $(-1)^{t+1}$. For $0 \le i \le t$, it is equal to 
\begin{align*}
(-1)^{t+1} (1-x)^{-(t+1-i)}(1+x)^{n-i-t-1}&=(-1)^i x^{-(t+1-i)}(1+x)^{n-i-t-1}(1-\frac{1}{x})^{-(t+1-i)} 
\end{align*}
Again note that the expansions of both $(1+x)^{n-i-t-1}$ and $(1-\frac{1}{x})^{-(t+1-i)}$ only consist of positive coefficients. Therefore $(-1)^i \lambda_i >0$. Similar as before, one can show that for $n-t \le i \le n$, $\lambda_i=\lambda_{n-i}$. Now we apply Corollary \ref{cor} once again. Note that none of $\lambda_i$'s is zero. We only need to show that either the number of positive or negative eigenvalues is small. For even $t=2m$, the only positive $\lambda_i$ are $\lambda_0, \lambda_2, \cdots, \lambda_{2m}$ and $\lambda_n, \lambda_{n-2}, \cdots, \lambda_{n-2m}$, whose total multiplicity is equal to $2\sum_{i=0}^m \binom{n}{2i}$. For odd $t=2m+1$, the only negative eigenvalues are $\lambda_1, \lambda_3, \cdots, \lambda_{2m+1}$ and $\lambda_{n-1}, \lambda_{n-3}, \cdots, \lambda_{n-(2m+1)}$, whose multiplicity is $2\sum_{i=0}^m \binom{n}{2i+1}$. Finally, it is easy to check both sum equals the sum in Kleitman's Theorem for the case $d=2t+1$, noting that for even $t=2m$, 
\begin{align*}
2 \sum_{i=0}^m \binom{n}{2i}= 2 \left(\sum_{i=0}^m\left( \binom{n-1}{2i-1}+\binom{n-1}{2i}\right)\right)=2\sum_{i=0}^t \binom{n-1}{i},
\end{align*}
and for odd $t=2m+1$, 
\begin{align*}
2\sum_{i=0}^m \binom{n}{2i+1}=2\left(\sum_{i=0}^m \left(\binom{n-1}{2i} + \binom{n-1}{2i+1}\right)\right)=2 \sum_{i=0}^t \binom{n-1}{i}.
\end{align*}
\end{proof}
\section{Extensions to arbitrary distance sets}\label{sec_ext}
In this section, we discuss a few generalizations of Kleitman's theorem to other sets of allowed distances. The next theorem shows that a bound similar to Kleitman's holds for all $n$ when the set of allowed distances consists of consecutive integers.
\begin{theorem}\label{thm_st}
For given integers $t > s \ge 0$, suppose $\mathcal{F}$ is a collection of binary vectors in $\{0, 1\}^n$, such that for every $\vec{x}, \vec{y} \in \mathcal{F}$, $d(\vec{x}, \vec{y}) \in \mathcal{L}$, with $\mathcal{L}=\{2s+1, \cdots, 2t\}$, then for all $n$, 
$$|\mathcal{F}| \le \binom{n}{t-s}+2\binom{n}{t-s+1}+ \cdots + 2\binom{n}{0}.$$
\end{theorem}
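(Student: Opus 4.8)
The plan is to realize $\mathcal{F}$ as an independent set in the graph $G$ on $\{0,1\}^n$ in which $\vec{x} \sim \vec{y}$ exactly when $d(\vec{x},\vec{y}) \notin \mathcal{L}$ — that is, when $1 \le d(\vec{x},\vec{y}) \le 2s$ or $d(\vec{x},\vec{y}) \ge 2t+1$ — and to bound $\alpha(G)$ through Corollary \ref{cor} with a pseudo-adjacency matrix built in the spirit of the proof of Theorem \ref{thm_kleitman}. I would take $M = \sum_k f(k)\, M_{n,k}$ with the real sequence $f$ supported on $\{1,\dots,2s\} \cup \{2t+1,\dots,n\}$, so that $M$ vanishes on the forbidden distances $\mathcal{L}$ and hence is a pseudo-adjacency matrix of $G$. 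By Lemma \ref{lemma_eig2}, the eigenvalue of $M$ of multiplicity $\binom{n}{i}$ is the constant term of $g(x)(1-x)^i(1+x)^{n-i}$, where $g(x) = \sum_k f(k) x^{-k}$, and — just as in the proof of Theorem \ref{thm_kleitman} — we may let $k$ range over all integers $\ge 2t+1$ without changing these constant terms. The aim is to choose $f$ so that all but $\binom{n}{t-s} + 2\sum_{j=0}^{t-s-1}\binom{n}{j}$ of the resulting eigenvalues, counted with multiplicity, carry one fixed sign.

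For the ``upper'' part of $f$ I would set $f(k) = \binom{\floor{(k-1)/2}-s}{t-s}$ for $k \ge 2t+1$. Grouping the terms $k = 2\ell+1, 2\ell+2$ and putting $m = \ell - t$, the same geometric-series computation as in the even case of the proof of Theorem \ref{thm_kleitman} gives
$$\sum_{k \ge 2t+1} f(k)\, x^{-k} \;=\; \frac{x+1}{x^{2s}\,(x^2-1)^{t-s+1}},$$
a power series in $x^{-1}$ supported on $k \ge 2t+1$, as needed. Multiplying by $(1-x)^i(1+x)^{n-i}$ and simplifying as there, the contribution of this part to the eigenvalue $\lambda_i$ is $(-1)^{t-s+1}$ times the coefficient of $x^{2s}$ in $(1-x)^{i-(t-s+1)}(1+x)^{n-i-(t-s)}$. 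For $i$ in the middle range $I_0 := \{t-s+1, \dots, n-(t-s)\}$ both exponents are nonnegative, so this contribution equals $(-1)^{t-s+1}\Phi(i)$, where
$$\Phi(i) \;:=\; \sum_{p=0}^{2s} (-1)^p \binom{i-(t-s+1)}{p}\binom{n-i-(t-s)}{2s-p}$$
is a polynomial in $i$ of degree exactly $2s$ (it is the Krawtchouk value $K_{2s}(i-(t-s+1);\, n-2(t-s)-1)$).

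For the ``lower'' part, I would use that $1, K_1(\cdot;n),\dots,K_{2s}(\cdot;n)$ have degrees $0,1,\dots,2s$ in $i$ and hence form a basis of the space of polynomials in $i$ of degree at most $2s$; so one can choose $f(1),\dots,f(2s)$ (uniquely) with
$$\sum_{k=1}^{2s} f(k)\, K_k(i;n) \;=\; -(-1)^{t-s+1}\Phi(i) \;+\; \lambda^{*},$$
where $\lambda^{*}$ is the unique constant for which the right-hand side has zero coefficient on $K_0$. With this choice, for every $i \in I_0$ the upper and lower contributions to $\lambda_i$ cancel up to $\lambda^{*}$, so $\lambda_i = \lambda^{*}$ throughout $I_0$. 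The one point that remains is that $\lambda^{*} \neq 0$: since $\lambda^{*}$ is, up to sign, the $K_0$-coefficient $2^{-n}\sum_i \binom{n}{i}\Phi(i)$ of $\Phi$, this reduces to evaluating a shifted Vandermonde-type sum, which comes out to $\pm\binom{t}{s}$ — nonzero because $t \ge s$.

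Granting $\lambda^{*} \neq 0$: the eigenvalues $\lambda_i$ with $i \in I_0$, of total multiplicity $\sum_{i \in I_0}\binom{n}{i}$, all equal the nonzero number $\lambda^{*}$ and therefore carry one strict sign, so $\min\{n_{\le 0}(M), n_{\ge 0}(M)\}$ is at most the total multiplicity of the remaining eigenvalues, namely
$$\sum_{i=0}^{t-s}\binom{n}{i} + \sum_{i=n-(t-s)+1}^{n}\binom{n}{i} \;=\; \binom{n}{t-s} + 2\sum_{j=0}^{t-s-1}\binom{n}{j},$$
and Corollary \ref{cor} then bounds $|\mathcal{F}| = \alpha(G)$ by this quantity. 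This argument needs $I_0 \neq \emptyset$ (i.e.\ $n \ge 2(t-s)+1$) and $n \ge 2s$ so that the matrices $M_{n,k}$ with $k \le 2s$ are genuine; the remaining small $n$ go directly, since for $n \le 2(t-s)$ one has $\binom{n}{t-s} + 2\sum_{j=0}^{t-s-1}\binom{n}{j} \ge 2^n \ge |\mathcal{F}|$, while for $n \le 2s$ no distance in $\mathcal{L}$ is attainable and $|\mathcal{F}| \le 1$. (If $2t+1 > n$ the upper part of $M$ is empty, but the proof still works verbatim: on $I_0$ the would-be upper contribution already vanishes, as then $(1-x)^{i-(t-s+1)}(1+x)^{n-i-(t-s)}$ has degree $n-2(t-s)-1 < 2s$.) I expect the one genuinely delicate point to be the evaluation $2^{-n}\sum_i \binom{n}{i}\Phi(i) = \pm\binom{t}{s}$ — the shifts by $t-s\pm1$ inside the binomial coefficients keep this from being a one-line Vandermonde identity — with everything else a direct adaptation of the proof of Kleitman's theorem above.
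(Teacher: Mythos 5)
Your overall strategy is the same one the paper uses: set $M = \sum_k f(k)\,M_{n,k}$ with $f$ supported off $\mathcal{L}$, compute the eigenvalues via Krawtchouk polynomials and a generating-function constant-term, show that $\lambda_i$ is a single nonzero constant for all $i$ in the middle range $I_0 = \{t-s+1,\dots,n-(t-s)\}$, and invoke Corollary \ref{cor}. Your choice $f(k) = \binom{\lfloor(k-1)/2\rfloor - s}{t-s}$ on $k\ge 2t+1$ and its generating function $\frac{x+1}{x^{2s}(x^2-1)^{t-s+1}}$ coincide with what the paper uses there.

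Where you and the paper diverge is in the lower range $k\in\{1,\dots,2s\}$, and this is where your argument has a genuine gap. The paper simply uses the \emph{same} closed-form formula $f(2\ell+1)=f(2\ell+2)=\binom{\ell-s}{t-s}$ for all $\ell\ge 0$, with the binomial coefficient extended to negative arguments; the full generating function $(x^{-1}+x^{-2})g_s(x^{-2})$ is then a single explicit rational function (via the identity $g_s = h_s$ proved by induction on $s$), and after simplification the constant term on $I_0$ is read off as $(-1)^{t-s+1}\binom{t}{s}$. You instead define $f(1),\dots,f(2s)$ implicitly by Krawtchouk-basis expansion of $\Phi$, so that $\lambda_i=\lambda^*$ for $i\in I_0$ --- but then you must prove $\lambda^*\neq 0$, and you do not do so; you assert that it ``comes out to $\pm\binom{t}{s}$'' and flag it as a delicate point. (Incidentally, since constancy of $\lambda_i$ on $I_0$ forces the polynomial $(-1)^{t-s+1}\Phi + \sum_{k=1}^{2s}f(k)K_k(\cdot;n)$ to be constant, and this pins down $f(1),\dots,f(2s)$ uniquely when $|I_0|>2s$, your pseudo-adjacency matrix is in fact the same as the paper's --- only the justification differs.) As written, the proof is incomplete without the evaluation of $\lambda^*$.

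The gap is fillable more easily than you anticipate, and not by a Vandermonde argument but by the same generating-function trick you already used. Since $\Phi(i) = [x^{2s}]\,(1-x)^{i-(t-s+1)}(1+x)^{n-i-(t-s)}$ and $\sum_{i}\binom{n}{i}(1-x)^i(1+x)^{n-i} = \left((1-x)+(1+x)\right)^n = 2^n$, one has
$$2^{-n}\sum_{i=0}^n\binom{n}{i}\Phi(i) \;=\; [x^{2s}]\,(1-x)^{-(t-s+1)}(1+x)^{-(t-s)} \;=\; [x^{2s}]\,\frac{1}{(1-x)(1-x^2)^{t-s}} \;=\; \sum_{\ell=0}^{s}\binom{t-s-1+\ell}{\ell} \;=\; \binom{t}{s},$$
the last step by the hockey-stick identity. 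Hence $\lambda^* = (-1)^{t-s+1}\binom{t}{s}\neq 0$, which is exactly the value the paper obtains, and your proof then closes. The moral is that the paper's extended-binomial formula for $f$ is a shortcut that bakes this computation into a single rational-function simplification, rather than deferring it to a separate orthogonality calculation.
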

\begin{proof}
We follow the proof of Theorem \ref{thm_kleitman} for a different pseudo-adjacency matrix $M=\sum_{k=1}^n f(k) M_{n, k}$. Here for every integer $\ell \ge 0$, we take
$$f(2 \ell+ 1)=f(2\ell+ 2)=\binom{\ell-s}{t-s}.$$
By extending the definition of binomial coefficients to the whole set of integers, we have that $f(k) \neq 0$ if $k \ge 2t+1$, or $1 \le k \le 2s$. Therefore $M$ is a pseudo-adjacency matrix for our purpose of bounding independence number. 

The remaining task is to calculate the eigenvalues of $M$. Using similar arguments as in Theorem \ref{thm_kleitman}, we have that $M$ has an eigenvalue $\lambda_i$ of multiplicity $\binom{n}{i}$, and $\lambda_i$ is equal to the constant term in the following formal power series, for $|x|>1$,
$$\left(\sum_{k=1}^{\infty} f(k)x^{-k} \right)(1-x)^i (1+x)^{n-i}.$$
Let $g_s(x)=\sum_{k=0}^{\infty} \binom{k-s}{t-s}x^{k}$, we will first show by induction (on $s$) that for $|x|<1$, it converges to 
$$h_s(x)=\frac{\sum_{j=s}^t \binom{t}{j} (x-1)^{t-j}}{(1-x)^{t-s+1}}.$$
For $s=0$, $$g_s(x)=\binom{t}{t} x^{t}+ \binom{t+1}{t}x^{t+1} + \cdots =x^t/(1-x)^{t+1},$$
which equals $h_s(x)$. Assume for $s \ge 0$, $g_s(x)=h_s(x)$, then 
\begin{align*}
h_{s+1}(x)&=h_s(x)(1-x)-(-1)^{t-s}\binom{t}{s} = g_s(x)(1-x)-(-1)^{t-s}\binom{t}{s}\\
&=\sum_{k=0}^{\infty} \binom{k-s}{t-s}x^k - \sum_{k=0}^{\infty}\binom{k-s}{t-s}x^{k+1} -(-1)^{t-s}\binom{t}{s}\\
&=\sum_{k=0}^{\infty} \binom{k-s}{t-s}x^k - \sum_{k=1}^{\infty}\binom{k-s-1}{t-s}x^{k} -(-1)^{t-s}\binom{t}{s}\\
&=\binom{-s-1}{t-s} + \sum_{k=0}^{\infty} \binom{k-s-1}{t-s-1}x^k - (-1)^{t-s} \binom{t}{s} = g_{s+1}(x).
\end{align*}
This completes the proof that $g_s(x)=h_s(x)$. Now we have
\begin{align}\label{eqn_long}
\left(\sum_{k=1}^{\infty} f(k)x^{-k} \right)(1-x)^i (1+x)^{n-i}&=
(1-x)^i (1+x)^{n-i} (x^{-1}+x^{-2}) g_s(x^{-2}) \nonumber\\
&=(1-x)^i (1+x)^{n-i+1}x^{-2} \frac{\sum_{j=s}^t \binom{t}{j} (x^{-2}-1)^{t-j}}{(1-x^{-2})^{t-s+1}}\\
&=(-1)^{t-s+1} \sum_{j=s}^t  \binom{t}{j}(1+x)^{n-i-j+s}(1-x)^{i-j+s-1} x^{2(j-s)}. \nonumber
\end{align}
Recall that the eigenvalue $\lambda_i$ of multiplicity $\binom{n}{i}$ is equal to the constant term in the power series. Note that the sum is over all integers $j$ between $s$ and $t$. In this range, $n-i-j+s \ge n-(t-s)-i$, $i-j+s-1 \ge i-1-(t-s)$, and $2(j-s)$ is strictly greater than $0$ except for $j=s$. So for $(t-s)+1 \le i \le n-(t-s)$, the product in the sum is a polynomial divisible by $x$ and thus its constant term is $0$. Only $j=s$ would contribute to the constant term. Therefore for $i$ in this range,
$$\lambda_i=(-1)^{t-s+1} \binom{t}{s}.$$
In other words, for $(t-s)+1 \le i \le n-(t-s)$, $\lambda_i$ have the same sign. This would immediately imply 
\begin{align*}
\alpha(G) &\le \min\{n_{\le 0}(M), n_{\ge 0}(M)\} \le \sum_{i=0}^{t-s} \binom{n}{i} + \sum_{i=n-(t-s)+1}^n \binom{n}{i}\\
&= \binom{n}{t-s}+2\binom{n}{t-s+1}+ \cdots + 2\binom{n}{0}.
\end{align*}
\end{proof}
\noindent \textbf{Remark.} For sufficiently large $n$, by a slightly more careful analysis, one can actually remove the factors of $2$ in the statement of Theorem \ref{thm_st}, and show that 
$$|\mathcal{F}| \le \binom{n}{t-s}+\binom{n}{t-s+1}+ \cdots + \binom{n}{0},$$
which gives the exact same bound as in Theorem \ref{thm_st}. To achieve this goal, one can show that when $t-s$ is odd, $\lambda_{2i} > 0$ whenever $0 \le 2i \le t-s$ and $\lambda_{n-2i-1}>0$ whenever $n-2i-1>n-(t-s)$; and when $t-s$ is even, $\lambda_{2i+1}<0$ whenever $0\le 2i+1 \le t-s$, and $\lambda_{n-2i}<0$ whenever $n-2i>n-(t-s)$. Then applying Corollary \ref{cor} gives the desire upper bound. The calculations are a bit tedious, so we decided to omit the details, since it still gives a upper bound that is asymptotically the same, $(1+o(1))\binom{n}{t-s}$. Moreover, using similar techniques, one can show that if the set of allowed distances are $\{2s+1, \cdots, 2t+1\}$, then $|\mathcal{F}| \le (2+o(1))\binom{n}{t-s}$, generalizing Kleitman's Theorem for odd diameters.\\

When $\mathcal{L}=\{2s+1,\cdots, 2t\}$, Theorem \ref{thm_st} gives an upper bound which is $O(n^{t-s})$ for fixed $s, t$ and large $n$. The following theorem shows that this upper bound is tight up to a constant factor.

\begin{theorem}\label{thm_lower}
For sufficiently large $n$, there exists a family $\mathcal{F}$ of $(1/\binom{t}{s}-o(1)) \binom{n}{t-s}$ binary vectors in $\{0, 1\}^n$, such that for every two vectors $\vec{x}, \vec{y} \in \mathcal{F}$, $d(\vec{x}, \vec{y}) \in \mathcal{L}=\{2s+1, \cdots, 2t\}$.
\end{theorem}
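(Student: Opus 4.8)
The plan is to take $\mathcal{F}$ to be the set of indicator vectors of the blocks of a near-optimal \emph{partial Steiner system}. Recall that a partial Steiner system $S(t-s,\,t,\,n)$ is a collection of $t$-element subsets (blocks) of $[n]$ in which every $(t-s)$-element subset of $[n]$ is contained in at most one block; equivalently, it is a family of $t$-sets whose pairwise intersections all have size at most $t-s-1$. First I would verify that such a family works: if $A,B$ are two distinct blocks, then, viewing them as indicator vectors, $d(A,B)=|A\Delta B|=2t-2|A\cap B|$, an even integer which is at least $2t-2(t-s-1)=2s+2$ (using $|A\cap B|\le t-s-1$) and at most $2t$ (using $|A\cap B|\ge 0$). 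Hence $d(A,B)\in\{2s+2,\dots,2t\}\subseteq\mathcal{L}$, and distinct blocks give distinct vectors, so all pairwise distances lie in $\mathcal{L}$.

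Next I would produce such a system with as many blocks as possible. A one-line double count shows that any $S(t-s,t,n)$ has at most $\binom{n}{t-s}/\binom{t}{t-s}=\binom{n}{t-s}/\binom{t}{s}$ blocks, since distinct blocks use disjoint families of $(t-s)$-subsets of $[n]$. The key input is that this bound is asymptotically attained: by R\"odl's resolution of the Erd\H{o}s--Hanani conjecture (the ``nibble'' method) on asymptotically optimal packings, for every fixed pair $t>s\ge 0$ there exists, as $n\to\infty$, a partial Steiner system $S(t-s,t,n)$ with $(1-o(1))\binom{n}{t-s}/\binom{t}{s}$ blocks. Taking $\mathcal{F}$ to be the indicator vectors of these blocks yields $|\mathcal{F}|=(1/\binom{t}{s}-o(1))\binom{n}{t-s}$, as desired. (One could alternatively quote Keevash's theorem on the existence of designs to obtain, for $n$ in suitable residue classes, an exact $S(t-s,t,n)$ meeting the bound with equality, but R\"odl's theorem already handles all large $n$ uniformly.)

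I do not expect a serious obstacle here; the main point is simply to recognize the reformulation: restricted to constant-weight-$t$ vectors, the requirement that all pairwise Hamming distances lie in $\{2s+1,\dots,2t\}$ is exactly the partial-Steiner-system condition, and near-optimal such systems are a classical consequence of the probabilistic method. The one mild subtlety is that weight-$t$ vectors realize only the \emph{even} distances in $\mathcal{L}$; since the theorem asks only for distances lying \emph{in} $\mathcal{L}$ this costs nothing here, though it does indicate that closing the remaining factor-$\binom{t}{s}$ gap between this construction and the upper bound of Theorem \ref{thm_st} — i.e.\ deciding whether the truth is $(1+o(1))\binom{n}{t-s}$ — would require mixing in vectors of several different weights and is likely a genuinely harder problem.
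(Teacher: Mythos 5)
Your construction is correct and is essentially the same as the paper's: both reduce the lower bound to Rödl's theorem on near-optimal packings (the Erd\H{o}s--Hanani conjecture), applied with block size $t$ and ``covered set'' size $t-s$. The only cosmetic difference is that the paper pads each $t$-set $V_i$ with a fixed common $t$-set to obtain weight-$2t$ vectors before computing distances, whereas you use the $t$-sets directly as weight-$t$ vectors; since the common part contributes nothing to the symmetric differences, the two families realize exactly the same pairwise distances, and your version is mildly cleaner.
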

\begin{proof}
We will define a family $\mathcal{F}$ consisting of some vectors with $2t$ $1$-coordinates. For two such vectors $\vec{x}$ and $\vec{y}$, denote by $X$ and $Y$ the $t$-sets they naturally correspond to. Then $d(\vec{x}, \vec{y}) \in \{2s+1, \cdots, 2t\}$ is equivalent to $4t-2|X \cap Y| \in \{2s+1, \cdots, 2t\}$, i.e. $|X \cap Y| \in \{t, \cdots, 2t-s-1\}$. By the famous result of R\"odl  \cite{rodl} on the Erd\H os-Hanani Conjecture \cite{erdos-hanani}, for sufficiently large $n$, there exists a packing of $m=(1-o(1))\binom{n-t}{t-s}/\binom{t}{t-s}$ copies of complete $(t-s)$-uniform hypergraphs $K_{t}^{t-s}$ in $K_{n-t}^{t-s}$. Suppose the vertex set of these hypercliques are $V_1, \cdots, V_m$. Then $|V_i|=t$ and $|V_i \cap V_j| \in \{0, \cdots, t-s-1\}$. Take $F_i=V_i \cup \{n-t+1, \cdots, n\}$ and $\mathcal{F}=\{F_1, \cdots, F_m\}$. It is easy to check that $|F_i|=2t$ and $|F_i \cap F_j| \in \{t, \cdots, 2t-s-1\}$.
\end{proof}

For a set $\mathcal{L}$ of integers, let $f_{\mathcal{L}}(n)$ be the maximum number of binary vectors in $\{0, 1\}^n$ with pairwise Hamming distance in $\mathcal{L}$. The theorems above show that 
$$(1-o(1)) \binom{n}{t-s}/\binom{t}{s} \le f_{\{2s+1, \cdots, 2t\}}(n) \le (1+o(1))\binom{n}{t-s}.$$	
For $s=0$ the upper and lower bounds agree, as shown by Theorem \ref{thm_kleitman}. For general $s$ and $t$, it is plausible that the lower bound is asymptotically tight. We are able to verify this conjecture for the special case $\mathcal{L}=\{2s+1, 2s+2\}$. We start with the following lemma on subsets of restricted intersection sizes.
\begin{lemma} \label{lem_eq_int}
Given integers $i > j \ge 1$, suppose $\mathcal{F}$ is a collection of $i$-subsets of $[n]$, whose pairwise intersection has size exactly $j$, then $|\mathcal{F}| \le  (1+o(1))n/(i-j).$
\end{lemma}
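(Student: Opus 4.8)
The plan is to deduce the bound from a classical structural result about uniform set families with a single prescribed pairwise intersection size, namely the theorem of Deza (later refined by Deza and Frankl): if every two distinct members of a family of $i$-subsets of an arbitrary ground set meet in exactly $j$ elements, then one of two alternatives holds — either the family has at most $i^2-i+1$ members, or it is a \emph{sunflower}, i.e.\ there is a fixed $j$-set $Y$ with $A\cap B=Y$ for all distinct members $A,B$. Since in our situation $i$ and $j$ are fixed while $n\to\infty$, I would first dispose of the bounded alternative: it gives $|\mathcal{F}|=O(1)$, which is $o\!\left(n/(i-j)\right)$ because $i-j\ge 1$, so it is consistent with (and in fact far stronger than) the claimed bound.

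The only remaining case is that $\mathcal{F}$ is a sunflower with some core $Y$, $|Y|=j$. Here I would simply count the elements appearing in the petals: for each $A\in\mathcal{F}$ the set $A\setminus Y$ has size $i-j$, and these petals are pairwise disjoint subsets of $[n]\setminus Y$. Hence $|\mathcal{F}|\,(i-j)\le n-j$, so $|\mathcal{F}|\le (n-j)/(i-j)\le n/(i-j)$. Combining the two cases gives $|\mathcal{F}|\le\max\{\,i^2-i+1,\ n/(i-j)\,\}$, and the right-hand side is at most $(1+o(1))\,n/(i-j)$ — indeed it equals $n/(i-j)$ as soon as $n>(i^2-i+1)(i-j)$ — which is exactly the assertion of the lemma.

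The only genuine content is the appeal to Deza's theorem, so the main ``obstacle'' is really a presentational choice: cite it, or include a proof. If a self-contained treatment is wanted, it suffices to prove the weaker dichotomy ``if $|\mathcal{F}|$ exceeds a constant depending only on $i$ then $\mathcal{F}$ is a sunflower with a $j$-element core'', since the precise threshold is irrelevant to the asymptotic conclusion above; this weaker statement follows from a short and classical double-counting/pigeonhole argument applied to the traces $B\cap A_0$ ($B\neq A_0$), which are $j$-subsets of the fixed $i$-element set $A_0$. I note in passing that the Ray-Chaudhuri--Wilson inequality on its own only yields $|\mathcal{F}|\le\binom{n}{1}=n$, which is off by the factor $i-j$ and therefore not enough; it is the sunflower structure that recovers the correct leading constant.
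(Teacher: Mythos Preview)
Your proof is correct and, at bottom, the same argument as the paper's. The paper does not cite Deza's theorem by name but instead gives exactly the ``self-contained treatment'' you describe at the end: it fixes one member $F_1=\{1,\dots,i\}$, partitions the remaining sets according to their trace $F\cap F_1$ (a $j$-subset $S$ of $F_1$), observes that within each class $\mathcal{F}_S$ the residuals $F\setminus S$ are pairwise disjoint $(i-j)$-subsets of $[n]\setminus F_1$, and then shows that if two distinct classes $\mathcal{F}_S,\mathcal{F}_T$ are both nonempty they are cross-intersecting and hence each has size at most $i-j$. This yields $|\mathcal{F}|\le 1+(\binom{i}{j}-1)(i-j)+(n-i)/(i-j)$, which is the same $(1+o(1))n/(i-j)$. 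So your version is the paper's proof with the structural dichotomy packaged as a citation; the paper's version is your ``if a self-contained treatment is wanted'' sketch carried out in full.
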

\begin{proof}
Suppose $\mathcal{F}=\{F_1, \cdots, F_m\}$, and without loss of generality assume $F_1=\{1, \cdots, i\}$. For every $j$-subset $S$ of $[i]$, let $\mathcal{F}_S=\{F: F \cap [i]=S\}$, then by the assumption $\mathcal{F}=\{F_1\} \cup (\bigcup_S \mathcal{F}_S)$. Let $\mathcal{F}'_S=\{F \setminus S: F \in \mathcal{F}_S\}$. Then each non-empty $\mathcal{F}'_S$ consists of pairwise disjoint $(i-j)$-subsets of $\{i+1, \cdots, n\}$. This immediately gives $|\mathcal{F}_S| \le (n-i)/(i-j)$.

We claim that for two distinct $j$-sets $S$ and $T$, if both $\mathcal{F}_S$ and $\mathcal{F}_T$ are non-empty, then they both contain at most $i-j$ sets. This is because $|S \cap T|<j$ and thus $\mathcal{F}'_S$ and $\mathcal{F}'_T$ are cross-intersecting, and that a set $U \in \mathcal{F}'_S$ of size $i-j$ can only intersect with at most $i-j$ pairwise disjoint subsets. Therefore for all but at most one set $S$, $|\mathcal{F}_S| \le i-j$. Hence 
$$|\mathcal{F}| \le 1+\sum_{S: S \subset [i], |S|=j} |\mathcal{F}_S| \le 1+\left(\binom{i}{j}-1\right)(i-j)+\frac{n-i}{i-j}=(1+o(1))\frac{n}{i-j}.$$
This completes the proof.


\end{proof}
\begin{theorem}\label{thm_s+1}
For integers $s \ge 0$, 
$$f_{\{2s+1, 2s+2\}}(n) = (1+o(1)) \frac{n}{s+1}.$$
\end{theorem}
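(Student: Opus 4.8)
The lower bound $f_{\{2s+1,2s+2\}}(n)\ge (1-o(1))\tfrac{n}{s+1}$ is already contained in Theorem~\ref{thm_lower} applied with $t=s+1$: then $\mathcal{L}=\{2s+1,2s+2\}$ and $\binom{t}{s}=\binom{s+1}{s}=s+1$, so the construction there yields a family of size $(1/(s+1)-o(1))\binom{n}{1}$. The content of the theorem is therefore the matching upper bound, and the plan is to prove $|\mathcal{F}|\le (1+o(1))\tfrac{n}{s+1}$. I will assume $s\ge 1$; the case $s=0$ is already covered by Theorem~\ref{thm_kleitman}, since any $\mathcal{F}$ with all distances in $\{1,2\}$ has diameter $\le 2$, hence $|\mathcal{F}|\le n+1=(1+o(1))n$.

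Since the map $\vec x\mapsto \vec x+\vec v$ is an isometry of $\{0,1\}^n$, I may assume $\vec 0\in\mathcal F$. Every other member of $\mathcal F$ then has weight $2s+1$ or $2s+2$; let $\mathcal A$, respectively $\mathcal B$, be the subfamily of sets of size $2s+1$, respectively $2s+2$. Comparing weights and parities: for $A,A'\in\mathcal A$ the Hamming distance is even, hence equals $2s+2$, forcing $|A\cap A'|=s$; similarly $|B\cap B'|=s+1$ for $B,B'\in\mathcal B$; and for $A\in\mathcal A,\ B\in\mathcal B$ the distance is odd, hence equals $2s+1$, forcing $|A\cap B|=s+1$. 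Applying Lemma~\ref{lem_eq_int} (with $i-j=s+1$ in both cases, and $j=s\ge1$, $j=s+1\ge1$ respectively) gives $|\mathcal A|\le(1+o(1))\tfrac{n}{s+1}$ and $|\mathcal B|\le(1+o(1))\tfrac{n}{s+1}$. This yields the weaker bound $|\mathcal F|\le(2+o(1))\tfrac{n}{s+1}$, and the real task is to remove the factor $2$ by showing that $\mathcal A$ and $\mathcal B$ cannot both be large.

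For this I will use the sunflower structure underlying the proof of Lemma~\ref{lem_eq_int} (equivalently, the Deza--Frankl theorem on families with a single intersection size): sorting the members of $\mathcal A$ by their trace on a fixed $A_1\in\mathcal A$ produces a decomposition $\mathcal A=\mathcal A^*\cup\mathcal A^\circ$ with $|\mathcal A^\circ|=O_s(1)$ and $\mathcal A^*$ a sunflower, i.e. there is a core $S^*$ with $|S^*|=s$ and $A\cap A'=S^*$ for all distinct $A,A'\in\mathcal A^*$, so the petals $A\setminus S^*$ are pairwise disjoint $(s+1)$-sets; analogously $\mathcal B=\mathcal B^*\cup\mathcal B^\circ$ with $|\mathcal B^\circ|=O_s(1)$ and $\mathcal B^*$ a sunflower with core $T^*$ of size $s+1$ and pairwise disjoint $(s+1)$-set petals. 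Now suppose both $|\mathcal A^*|,|\mathcal B^*|>2s+3$. Since the petals of $\mathcal B^*$ are disjoint and $|S^*|=s$, some petal $Q_0$ of $\mathcal B^*$ misses $S^*$; since the petals of $\mathcal A^*$ are disjoint and $|T^*\cup Q_0|=2s+2$, some petal $P_0$ of $\mathcal A^*$ misses $T^*\cup Q_0$. For $A=S^*\cup P_0\in\mathcal A$ and $B=T^*\cup Q_0\in\mathcal B$ this gives $A\cap B=S^*\cap T^*$, hence $|A\cap B|\le|S^*|=s<s+1$, contradicting the cross-intersection condition. Therefore $\min(|\mathcal A^*|,|\mathcal B^*|)\le 2s+2$, so $\min(|\mathcal A|,|\mathcal B|)=O_s(1)$, and consequently
$$|\mathcal F| = 1+|\mathcal A|+|\mathcal B| \le 1+(1+o(1))\frac{n}{s+1}+O_s(1) = (1+o(1))\frac{n}{s+1},$$
which together with the lower bound completes the proof.

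The main obstacle is exactly this last structural step. The two naive invocations of Lemma~\ref{lem_eq_int} are a factor of $2$ apart, and the interaction between $\mathcal A$ and $\mathcal B$ must be used: the point is that each of $\mathcal A,\mathcal B$ is, up to $O_s(1)$ exceptional sets, a sunflower, and that the forced cross-intersection number $s+1$ strictly exceeds the size $s$ of $\mathcal A$'s core — a numerical mismatch that pigeonholing on the disjoint petals turns into a contradiction once both subfamilies are moderately large. Some minor care is also needed regarding the hypothesis of Lemma~\ref{lem_eq_int} (it requires the smaller intersection parameter to be at least $1$, which is why $s=0$ is separated off) and in checking that all $o(1)$ terms are uniform in $n$, which they are because $s$, and hence every set size $2s+1,2s+2,\dots$ occurring in the argument, is fixed.
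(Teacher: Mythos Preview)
Your argument is correct, but the paper proceeds more directly. After deriving the same intersection conditions ($|A\cap A'|=s$, $|B\cap B'|=s+1$, $|A\cap B|=s+1$), the paper adjoins a single new element $n+1$ to every member of $\mathcal A$, producing $\mathcal A'\subset\binom{[n+1]}{2s+2}$; then $\mathcal C=\mathcal A'\cup\mathcal B$ is a family of $(2s+2)$-subsets of $[n+1]$ \emph{all} of whose pairwise intersections have size exactly $s+1$ (the extra element $n+1$ raises the $\mathcal A$--$\mathcal A$ intersections from $s$ to $s+1$ and leaves the others unchanged). One application of Lemma~\ref{lem_eq_int} to $\mathcal C$ then gives $|\mathcal A|+|\mathcal B|\le(1+o(1))\frac{n}{s+1}$ immediately, with no need to separate the case $s=0$.

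Your route instead applies Lemma~\ref{lem_eq_int} to $\mathcal A$ and $\mathcal B$ separately, incurring a factor of $2$, and then removes it via the Deza sunflower structure: the key point is the numerical mismatch between the core size $s$ of $\mathcal A$'s sunflower and the forced cross-intersection size $s+1$, which a pigeonhole on petals turns into a contradiction. This is sound and in a sense more informative (it explains \emph{why} the two layers cannot coexist), but it is longer and requires invoking Deza's theorem beyond what Lemma~\ref{lem_eq_int} states; the paper's lifting trick sidesteps all of that by making the two layers look identical.
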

\begin{proof}
Let $\mathcal{F}$ be a family of $m$ vectors in $\{0, 1\}^n$ with pairwise Hamming distance either $2s+1$ or $2s+2$. Without loss of generality assume one of these vectors is the all-zero vector, then the remaining $m-1$ vectors are the indicator vectors of subsets of $[n]$ of size $2s+1$ or $2s+2$. Denote by $\mathcal{A}$ the family of these $(2s+1)$-sets, and $\mathcal{B}$ the family of $(2s+2)$-sets. For two sets $A_1, A_2 \in \mathcal{A}$, we have 
$$|A_1|+|A_2|-2|A_1 \cap A_2|=|A_1 \Delta A_2| \in \{2s+1, 2s+2\}.$$
By considering the parity, this gives $|A_1 \cap A_2|=s$. Similar arguments show that for two sets $B_1, B_2 \in \mathcal{B}$, $|B_1 \cap B_2|=s+1$. And for $A \in \mathcal{A}$, $B \in \mathcal{B}$, $|A \cap B|=s+1$. Now we construct a new family $\mathcal{A}'$ of subsets of $[n+1]$, by adding the element $n+1$ to each set in $\mathcal{A}$. It is straightforward to check that $\mathcal{C}=\mathcal{A}' \cup \mathcal{B}$ satisfies the property that every set contains $2s+2$ elements, while every two subsets intersect in exactly $s+1$ elements. Now applying Lemma \ref{lem_eq_int} for $\mathcal{C}$, we have $|\mathcal{C}| \le (1+o(1))n/(s+1)$, and the same upper bound on $|\mathcal{F}|$ and $f_{\{2s+1, 2s+2\}}(n)$ follows.

On the other hand, Theorem \ref{thm_lower} with $t=s+1$ gives $f_{\{2s+1, 2s+2\}}(n) \ge (1-o(1))n/(s+1)$ and this completes the proof.
\end{proof}

\smallskip

Note that $\{2s+1, \cdots, 2t\}$ is a set consisting of $2(t-s)$ integers. It is tempting to speculate that the order of magnitude of $f_{\mathcal{L}}(n)$ solely depends on the size of the set $\mathcal{L}$ of allowed distances. However this is false. For example, suppose $\mathcal{L}$ only consists of odd distances. Then $f_{\mathcal{L}}(n) \le 2$ since if the family contains three vectors, their corresponding subsets $A, B, C$ satisfy 
$$2(|A \cup B \cup C|-|A \cap B \cap C|)=|A \Delta B|+|A \Delta C|+|B \Delta C| \equiv 1 \pmod 2,$$
resulting in a contradiction. This observation immediately leads to the following simple upper bound for general $\mathcal{L}$.

\begin{theorem}\label{thm_even}
Let $\mathcal{L}$ be a set of distinct positive integers. Suppose $c$ of them are even numbers. Then when $n$ tends to infinity, 
$f_{\mathcal{L}}(n)=O(n^c).$
\end{theorem}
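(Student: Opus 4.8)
The proof should follow the same strategy as Theorem~\ref{thm_s+1}: reduce to a pure intersection problem, then iterate the idea behind Lemma~\ref{lem_eq_int}. The goal is a polynomial bound $O(n^c)$ where $c$ is the number of even distances in $\mathcal{L}$, so I would aim for an induction on $c$ (equivalently on $|\mathcal{F}|$ relative to the structure of $\mathcal{L}$). First I would record, as in the discussion preceding the theorem, that if $\mathcal{L}$ contains \emph{no} even numbers then $f_{\mathcal{L}}(n) \le 2$ by the parity identity $|A\Delta B|+|A\Delta C|+|B\Delta C|\equiv 0\pmod 2$; this is the base case $c=0$.

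For the inductive step, suppose $c \ge 1$. Translating so that $\vec{0}\in\mathcal{F}$, every other vector of $\mathcal{F}$ is the indicator of a set whose size lies in $\mathcal{L}$. Pick one such set $A_0$, say of size $a_0$. Classify the remaining sets $A$ of $\mathcal{F}$ by the pair $(|A|,|A\cap A_0|)$: since $|A|+a_0-2|A\cap A_0| = |A\Delta A_0|\in\mathcal{L}$, for each value of $|A|$ there are only boundedly many possible intersection sizes $|A\cap A_0|$ (at most $|\mathcal{L}|$), so $\mathcal{F}$ splits into $O(1)$ classes, and it suffices to bound each class. Within a class, all sets have a fixed size $a$ and a fixed intersection $j$ with $A_0$; moreover a parity count forces the intersection with $A_0$ of two members $A,B$ of the class plus $|A\Delta B|$ to have controlled parity, so that the \emph{even} distances in $\mathcal{L}$ that can occur as $|A\Delta B|$ form a set of size at most $c$, and whenever two members of the class intersect $A_0$ in the \emph{same} $j$-subset $S$, passing to $A\setminus S$, $B\setminus S$ inside $[n]\setminus A_0$ preserves the family structure but forces all pairwise distances to be even (the odd ones get used up by the difference on $A_0$), dropping the even-distance count. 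Arguing as in Lemma~\ref{lem_eq_int}: for all but one of the $\binom{a_0}{j}=O(1)$ choices of $S$, the subfamily $\{A : A\cap A_0 = S\}$ is small because its ``shadows'' $A\setminus S$ cross-intersect and hence (being subsets of a Helly-type nature for sets of bounded size) number only $O(1)$; for the single exceptional $S$, the shadows form a family in $[n]\setminus A_0$ with the same ground-set-independent distance constraints but one fewer even distance, so by induction that subfamily has size $O(n^{c-1})$. Summing the $O(1)$ classes times $O(n^{c-1})$ gives $O(n^{c-1})$ per set $A_0$ — but I actually need the extra factor of $n$, which comes from not fixing $A_0$: instead one fixes only the ``type'' and lets the induction run on the shadow family directly.

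Let me restate the cleaner version of the step: fix a type (a size $a\in\mathcal{L}$), look at $\mathcal{F}_a = \{A\in\mathcal{F} : |A|=a\}$, fix any $A_0\in\mathcal{F}_a$, and for each $j$-subset $S\subseteq A_0$ with $j$ an allowed intersection size consider $\mathcal{G}_S = \{A\setminus S : A\in\mathcal{F}_a,\ A\cap A_0 = S,\ A\ne A_0\}$; these live in $[n]\setminus A_0$, all have size $a-j$, and their pairwise distances lie in $\mathcal{L}$ but now \emph{shifted} so that the number of even values available is strictly smaller (here is where the parity bookkeeping from $A_0$ does its work), hence $|\mathcal{G}_S| = O(n^{c-1})$ by induction; since there are $O(1)$ subsets $S$ and $O(1)$ sizes $a$, and since $\mathcal{F}_a \subseteq \{A_0\}\cup\bigcup_S\{A : A\cap A_0 = S\}$, we get $|\mathcal{F}| = O(n^{c-1})$ — which is too strong and hence wrong, signalling that one must be careful: the distances among the $\mathcal{G}_S$ need \emph{not} all drop an even value, only those \emph{within a single} $\mathcal{G}_S$ do, while \emph{across} different $\mathcal{G}_S, \mathcal{G}_T$ the cross-intersection argument of Lemma~\ref{lem_eq_int} caps the total by $O(|\mathcal{G}_S|)$ for the unique dominant $S$. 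So the honest bound per fixed $A_0$ is $O(n^{c-1})$ only among sets sharing $A_0$'s intersection pattern, and the missing factor of $n$ appears because $A_0$ itself ranges over a chain-like structure of length $O(n)$; making this last point rigorous is the crux.

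\textbf{Main obstacle.} The delicate point is the parity/counting argument that shows the shadow families $\mathcal{G}_S$ genuinely have one fewer \emph{even} allowed distance, and the careful accounting that converts "$O(n^{c-1})$ per intersection-pattern" into the correct global "$O(n^c)$" without either over- or under-counting — i.e.\ correctly identifying where the extra factor of $n$ (versus where only an $O(1)$ factor) is gained. In other words, the hard part is setting up the induction hypothesis in the right generality: it must be a statement about families with distances in an arbitrary $\mathcal{L}$ having $c$ even elements, \emph{not} merely about the specific sets $\{2s+1,\dots,2t\}$, so that the shadow operation lands inside the inductive hypothesis with the even-count decreased by exactly one.
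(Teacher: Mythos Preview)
Your first two moves coincide with the paper's: translate so that $\vec 0\in\mathcal F$, then partition the remaining sets by their size $a\in\mathcal L$. Within a fixed-size class $\mathcal F_a$, any two members $X,Y$ satisfy $|X\Delta Y|=2a-2|X\cap Y|$, which is even; hence $|X\cap Y|$ can take at most $c$ values. At this point the paper simply invokes the Frankl--Wilson theorem: a family of $a$-subsets of $[n]$ with at most $c$ pairwise intersection sizes has cardinality at most $\sum_{i=0}^{c}\binom{n}{i}=O(n^{c})$. Summing over the $|\mathcal L|=O(1)$ size classes finishes the proof in one line.

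Your proposal diverges by attempting to \emph{re-prove} the $c$-intersection bound via induction on $c$, and this is where the genuine gap lies. The crucial claim --- that the shadow family $\mathcal G_S=\{A\setminus S: A\in\mathcal F_a,\ A\cap A_0=S\}$ has ``one fewer even allowed distance'' --- is false as stated. If $A\cap A_0=B\cap A_0=S$, then $A\Delta B\subseteq [n]\setminus A_0$ and hence $(A\setminus S)\Delta(B\setminus S)=A\Delta B$: the pairwise distances inside $\mathcal G_S$ are \emph{identical} to those in $\mathcal F_a$, so no even value has been eliminated. (Equivalently, $|(A\setminus S)\cap(B\setminus S)|=|A\cap B|-|S|$ still ranges over a translate of the same $c$-element set of intersection values.) Your own diagnosis that the argument as written yields $O(n^{c-1})$, which is ``too strong and hence wrong,'' is exactly right, and the proposed fix --- recovering a factor of $n$ from ``$A_0$ ranging over a chain-like structure'' --- is not a mechanism that exists here: once $A_0$ is fixed, $\mathcal F_a$ is already covered by the $O(1)$ families $\mathcal G_S$, so no further union over $A_0$ is needed or available. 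The induction you want \emph{can} be made to work (it is essentially a hands-on proof of Ray-Chaudhuri--Wilson), but the correct reduction is different: one removes one intersection value by an inclusion-matrix or polynomial argument, not by the Lemma~\ref{lem_eq_int} shadow trick, which was tailored to the single-value case $c=1$.
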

\begin{proof}
Suppose $\mathcal{L}=\{\ell_1, \cdots, \ell_s\}$, and $\mathcal{F}$ is a family of vectors in $\{0, 1\}^n$ with pairwise Hamming distances in $\mathcal{L}$. Without loss of generality assume $\vec{0} \in \mathcal{F}$, and the rest of the vectors correspond to subsets in a family $\mathcal{A}$. Then every subset in $\mathcal{A}$ has size in $\mathcal{L}=\{\ell_1, \cdots,  \ell_s\}$. Define $\mathcal{A}_i=\{A: |A|=\ell_i, A \in \mathcal{A}\}$, for $i=1, \cdots, s$. Then for two distinct subsets $X, Y$ in $\mathcal{A}_i$, their corresponding vectors have Hamming distance equal to 
$$2\ell_i - 2|X \cap Y|=|X|+|Y|-2|X \cap Y|=|X \Delta Y| \in \mathcal{L}.$$
Since there are $c$ even numbers in $\mathcal{L}$, $|X \cap Y|$ belongs to a set of at most $c$ possible intersection sizes. By the Frankl-Wilson Theorem \cite{fw}, $|\mathcal{A}_i| \le \binom{n}{c}+ \cdots + \binom{n}{0}$, and therefore
$$|\mathcal{F}|=1+|\mathcal{A}| =1+\sum_{i=1}^s |\mathcal{A}_i| =O(n^c).$$
\end{proof}

Although the problem of determining the order of magnitude for every fixed distance set $\mathcal{L}$ and sufficiently large $n$ seems beyond our reach, we can still establish asymptotically sharp bounds for some other special distance sets. In fact, we have already established one at the beginning of Section 2. Recall that we started by using the Croot-Lev-Pach Lemma to show a weaker version of Theorem \ref{thm_kleitman}. Similarly, we can also prove the following asymptotically sharp estimate for a different type of arithmetic constraint on $\mathcal{L}$.

\begin{theorem} 
For given integers $n, k$ such that $n \ge 2^k$, let $\mathcal{L}$ consist of all the integers between $1$ and $n$ that are not divisible by $2^k$. Then, for $n$ sufficiently large, we have
$$f_{\mathcal{L}}(n)=(2+o(1)) \binom{n}{2^{k-1}-1}.$$ 
\end{theorem}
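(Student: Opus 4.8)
The statement has an upper bound and a matching lower bound, and I would attack them separately. For the upper bound I would run the same machinery used to prove Theorems~\ref{thm_kleitman} and~\ref{thm_st}: find a sequence $f(1),\dots,f(n)$, form the pseudo-adjacency matrix $M = \sum_{k} f(k) M_{n,k}$ for the graph $G$ on $\{0,1\}^n$ whose edges join vectors at a distance \emph{divisible by $2^k$} (these are precisely the pairs that are \emph{not} allowed in $\mathcal{F}$, so an $\mathcal{F}$ with all pairwise distances in $\mathcal{L}$ is an independent set in $G$), and apply Corollary~\ref{cor}. The first task is to choose $f$ so that $M$ is supported on the edges of $G$, i.e. $f(k)=0$ unless $2^k \mid k$... more precisely $f(k) = 0$ whenever $k$ is \emph{not} divisible by $2^k$; the natural choice mimicking the Kleitman proof is something like $f(k) = \binom{k/2^k - 1}{2^{k-1}-1}$ when $2^k \mid k$ and $f(k)=0$ otherwise (possibly with small corrections), chosen so that the generating function $\sum_k f(k) x^{-k}$ telescopes into a closed rational form. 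By Lemma~\ref{lemma_eig2} the eigenvalue $\lambda_i$ (multiplicity $\binom{n}{i}$) is the constant term of $\bigl(\sum_k f(k) x^{-k}\bigr)(1-x)^i (1+x)^{n-i}$; once $\sum_k f(k)x^{-k}$ is in closed form this becomes $(-1)^{?}(1+x)^{a(i)}(1-x)^{b(i)}$ times a short polynomial, so for $i$ in a middle range $a(i),b(i)\ge 0$ and $\lambda_i$ has a fixed sign, while for the $O(\binom{n}{2^{k-1}-1})$-many "boundary" values of $i$ one reads off the sign by expanding a Laurent series with one-signed coefficients, exactly as in the proofs above. Counting the multiplicities of the minority-sign eigenvalues then yields $f_{\mathcal{L}}(n) = \alpha(G) \le 2\sum_{i=0}^{2^{k-1}-1}\binom{n}{i} = (2+o(1))\binom{n}{2^{k-1}-1}$.

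\textbf{Choice of $f$ and the generating function --- the main obstacle.} The subtlety here (and what I expect to be the hard part) is that the set of \emph{allowed} distances $\mathcal{L}$ is the \emph{complement} of the multiples of $2^k$, rather than a tail $\{d+1,\dots,n\}$ or a block of consecutive integers, so $f$ must be supported on $\{2^k, 2\cdot 2^k, 3\cdot 2^k, \dots\}$. One needs $\sum_{j\ge 1} f(j\,2^k) x^{-j\,2^k}$ to collapse to something like $x^{-2^k}(1 - x^{-2^k})^{-2^{k-1}}$ (up to a constant and a factor that makes the boundary count work out), which forces the binomial-coefficient shape of $f$; I would verify this closed form by an induction/telescoping argument of the same flavour as the computation of $g_s(x)=h_s(x)$ in the proof of Theorem~\ref{thm_st}. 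Plugging in and simplifying $(1-x)^i(1+x)^{n-i}$ against $(1-x^{2^k})^{-2^{k-1}} = \bigl((1-x)(1+x)(1+x^2)\cdots\bigr)^{-2^{k-1}}$ via the factorization $x^{2^k}-1 = \prod_{m=0}^{k-1}(x^{2^m}+1)\cdot(x-1)$ will leave a manageable rational function whose constant term can be analyzed; tracking the exponent $2^{k-1}-1$ (which is exactly the radius forced by this factorization) through to the final count is where care is needed. It is worth double-checking that the exponent bookkeeping really produces $2^{k-1}-1$ and not $2^{k-1}$ or $2^k - 1$; the condition $n \ge 2^k$ in the hypothesis is presumably there to guarantee $\mathcal{L}\ne\emptyset$ and that the relevant $i$-ranges are nonempty.

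\textbf{Lower bound.} For the matching lower construction I would take the Hamming ball trick that already appears in the paper: let $\mathcal{F}$ be the Cartesian product of $\{0,1\}$ with the $(n-1)$-dimensional Hamming ball of radius $2^{k-1}-1$ centered at the origin. Any two vectors in this set differ in at most $2(2^{k-1}-1) = 2^k - 2 < 2^k$ coordinates and in at least $0$; the only way their distance is divisible by $2^k$ is if it equals $0$, i.e. they are equal, so all pairwise distances lie in $\mathcal{L}$ (note $0\notin\mathcal{L}$ by the "between $1$ and $n$" clause, and indeed distinct vectors have distance $\ge 1$). This family has size $2\sum_{i=0}^{2^{k-1}-1}\binom{n-1}{i} = (2+o(1))\binom{n}{2^{k-1}-1}$, matching the upper bound. (One should check the first coordinate genuinely contributes the factor $2$: flipping it changes a distance by exactly $1$, so it cannot create a distance divisible by $2^k$ out of a small distance, which is fine.) Combining the two bounds gives $f_{\mathcal{L}}(n) = (2+o(1))\binom{n}{2^{k-1}-1}$ as claimed.
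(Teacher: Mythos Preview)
Your lower bound is fine and matches the paper's: the Cartesian product of $\{0,1\}$ with an $(n-1)$-dimensional Hamming ball of radius $2^{k-1}-1$ has all pairwise distances in $\{1,\dots,2^k-1\}\subset\mathcal{L}$ and size $(2+o(1))\binom{n}{2^{k-1}-1}$.

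The upper bound is where your plan breaks down. The paper does \emph{not} use the Cvetkovi\'c/spectral machinery here; it uses the Croot--Lev--Pach rank bound over $\mathbb{F}_2$ together with Lucas' theorem. Concretely, it takes the polynomial $g(\vec z)=\prod_{j=0}^{k-1}\bigl(1-\binom{\|z\|}{2^j}\bigr)$ of degree $2^k-1$; by Lucas, $g(\vec z)\equiv 1\pmod 2$ iff $2^k\mid \|z\|$, so the matrix $M_{\vec x,\vec y}=g(\vec x-\vec y)$ restricts to the identity on $\mathcal{F}\times\mathcal{F}$, and Lemma~\ref{lem_clp} gives $|\mathcal{F}|\le 2\sum_{i=0}^{2^{k-1}-1}\binom{n}{i}$ immediately.

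Your spectral route, by contrast, runs into a structural obstruction for $k\ge 2$. Any pseudo-adjacency matrix $M=\sum_m f(m)M_{n,m}$ for this graph must have $f$ supported on multiples of $2^k$, so the generating function is a power series in $x^{-2^k}$; with your choice it becomes $1/(x^{2^k}-1)^{2^{k-1}}$. But $x^{2^k}-1=(x-1)(x+1)\prod_{m=1}^{k-1}(x^{2^m}+1)$, and only the factors $(x\pm 1)$ cancel against $(1-x)^i(1+x)^{n-i}$. The leftover denominator $\prod_{m=1}^{k-1}(x^{2^m}+1)^{2^{k-1}}$ has a Laurent expansion at infinity with coefficients of \emph{both} signs, so the constant term does not stay of fixed sign on the middle range of $i$. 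For instance, with $k=2$ and $n=10$ your $M$ is just $M_{10,8}$, and one computes $\lambda_2=13$, $\lambda_3=-3$, $\lambda_4=-3$, $\lambda_5=5$, $\lambda_6=-3$: the signs alternate irregularly, and Cvetkovi\'c gives only $\alpha(G)\le 344$ instead of the target $22$. The phenomenon persists for large $n$, since the middle-range $\lambda_i$ genuinely depend on $i$ rather than being constant as in the Kleitman proof. So the ``manageable rational function'' you anticipate is not manageable in the way you need, and the argument does not close without a fundamentally different idea --- which is exactly what the CLP/Lucas approach supplies.
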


The reader should compare this to Theorem \ref{thm_even}. This was also recorded independently by Ellenberg in \cite{jellenberg}. 

\begin{proof}
We start by proving the upper bound. Take a $2^n \times 2^n$ matrix $M$, whose rows and columns correspond to $n$-dimensional binary vectors, and $M_{\vec{x}, \vec{y}}=g(\vec{x}-\vec{y})$. Here $g: \mathbb{F}_2^n \rightarrow \mathbb{F}_2$ is the the following polynomial:
$$g(\vec{z})=\prod_{j=0}^{k-1} \left(1- {\|z\| \choose 2^j} \right).$$
Here $\| \cdot \|$ is the Hamming norm, so $d(\vec{x}, \vec{y})=\|\vec{x}-\vec{y}\|$. Suppose $\mathcal{F}$ is a family of vectors such that their pairwise Hamming distance is not divisible by $2^k$. Therefore for distinct $\vec{x}, \vec{y} \in \mathcal{F}$, in the binary representation of $\|\vec{x}-\vec{y}\|$, the last $k$ digits are not all $0$. 

At this point, we recall the classical Lucas' theorem.

\begin{lemma} \label{lucas} Given two positive integers $A \ge B$ and a prime number $p$, suppose their $p$-ary representation are $A=\sum_{i=0}^s a_i p^i$ and $B=\sum_{i=0}^s b_i p^i$, then 
	$$\binom{A}{B} \equiv \prod_{i=0}^s \binom{a_i}{b_i} \pmod p.$$
\end{lemma}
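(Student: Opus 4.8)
The plan is to prove Lucas' theorem via a generating function identity in the polynomial ring $\mathbb{F}_p[x]$. The starting observation is the ``freshman's dream'': since $\binom{p}{j} \equiv 0 \pmod p$ for every $1 \le j \le p-1$ (the numerator $p!$ carries a factor of $p$ while the denominator $j!\,(p-j)!$ does not), one has $(1+x)^p \equiv 1 + x^p \pmod p$ as polynomials. Iterating this $i$ times yields $(1+x)^{p^i} \equiv 1 + x^{p^i} \pmod p$ for every $i \ge 0$.

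Next I would use the base-$p$ expansion $A = \sum_{i=0}^s a_i p^i$ to write
$$(1+x)^A = \prod_{i=0}^s \left((1+x)^{p^i}\right)^{a_i} \equiv \prod_{i=0}^s \left(1 + x^{p^i}\right)^{a_i} \pmod p,$$
and then compare the coefficient of $x^B$ on the two sides. On the left it is $\binom{A}{B}$. On the right, expanding each factor as $(1+x^{p^i})^{a_i} = \sum_{c_i = 0}^{a_i} \binom{a_i}{c_i} x^{c_i p^i}$ and multiplying out, the coefficient of $x^B$ is $\sum \prod_{i=0}^s \binom{a_i}{c_i}$, where the sum ranges over all tuples $(c_0, \ldots, c_s)$ with $0 \le c_i \le a_i$ and $\sum_{i=0}^s c_i p^i = B$.

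The key step is to observe that this convolution sum has at most one term. Since $0 \le a_i \le p-1$, every admissible tuple satisfies $0 \le c_i \le p-1$, so $\sum c_i p^i = B$ is a base-$p$ representation of $B$; by uniqueness of base-$p$ digits we must have $c_i = b_i$ for all $i$. Hence the coefficient of $x^B$ on the right equals $\prod_{i=0}^s \binom{a_i}{b_i}$, with the usual convention that this product is $0$ whenever some $b_i > a_i$ — which matches the fact that in that case no admissible tuple exists. Equating the two coefficients modulo $p$ gives exactly $\binom{A}{B} \equiv \prod_{i=0}^s \binom{a_i}{b_i} \pmod p$, as claimed.

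This argument is entirely elementary and I do not expect a real obstacle. The only point requiring a moment's care is the bookkeeping in the last step: invoking uniqueness of the base-$p$ digits to collapse the convolution sum to a single term, while simultaneously handling the degenerate case $b_i > a_i$ so that the stated identity holds literally as an equality of residues (rather than only when all the binomial coefficients in sight are nonzero).
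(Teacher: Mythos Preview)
Your proof is correct; it is the standard generating-function argument for Lucas' theorem and all the steps are fine, including the handling of the degenerate case $b_i > a_i$. Note however that the paper does not actually prove this lemma: it simply recalls it as ``the classical Lucas' theorem'' and uses it as a black box, so there is no proof in the paper to compare against.
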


By Lemma \ref{lucas}, for some $j \in \{0, \cdots, k-1\}$, $\binom{\|z\|}{2^j} \equiv 1 \pmod 2$. Therefore $M_{\vec{x}, \vec{y}}=g(\vec{x}-\vec{y})\equiv 0 \pmod 2$. On the other hand, obviously $M_{\vec{x}, \vec{x}} \equiv 1 \pmod 2$. Therefore the family $\mathcal{F}$ naturally induces a submatrix of $M$, which is a unit matrix in $\mathbb{F}_2$ and has full rank. As a consequence, $|\mathcal{F}|$ is upper-bounded by the $\mathbb{F}_2$-rank of $M$. Note that $\deg(g)=\sum_{j=0}^{k-1} 2^j=2^k-1$. Lemma \ref{lem_clp} immediately implies
$$|\mathcal{F}| \le 2 \sum_{i=0}^{2^{k-1}-1} \binom{n}{i}.$$

The lower bound can be obtained again by the same extremal construction for Kleitman's theorem, when the allowed distance set is $\{1, \cdots, 2^{k}-1\}$. Theorem \ref{thm_kleitman} gives
$$f_{\mathcal{L}}(n) \ge 2 \sum_{j=0}^{2^{k-1}-1} \binom{n-1}{j}.$$ 
\end{proof}

\section{On intersective sets in $\mathbb{F}_p^N$} \label{sec_intersective}
In this section, we prove Theorem \ref{thm_intersective}. Here we briefly sketch the idea. We construct a graph $G$ with vertex set $\mathbb{F}_p^N$, two vertices $\vec{x}$ and $\vec{y}$ are adjacent if $\vec{x}-\vec{y}$ or $\vec{y}-\vec{x}$ is in $J=\{0, 1\}^N$. Then $\alpha(G)=D_{\mathbb{F}_p}(J, N)$. We will choose a pseudo-adjacency matrix for $G$ and apply Corollary \ref{cor}. 

The following lemma computes the spectrum of a family of matrices, that are natural candidates for the pseudo-adjacency matrix of $G$.

\begin{lemma}\label{lemma_eig}
Let $\omega=e^ {2i\pi/p}$ and  $M$ be a $p^N \times p^N$ matrix whose rows and columns are indexed by vectors in $\mathbb{F}_p^N$, and 
	$M_{\vec{u}, \vec{v}}=f(\vec{u}-\vec{v})$, where $f$ is a function mapping $\mathbb{F}_p^N$ to $\mathbb{R}$. Then the function $\chi_{\vec{v}}: \mathbb{F}_p^N \rightarrow \mathbb{C}$ with $\chi_{\vec{v}}(\vec{u}) =\omega^{\langle \vec{u}, \vec{v}\rangle}$, when viewed as a vector, is an eigenvector of $M$, corresponding to the eigenvalue
	$$\sum_{\vec{x}} f(\vec{x}) \omega^{-\langle \vec{v}, \vec{x}\rangle}.$$ Moreover all of them form a basis of $\mathbb{R}^{(p^N)}$.
\end{lemma}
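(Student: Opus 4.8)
The plan is to recognize $M$ as the matrix of a convolution operator on the finite abelian group $\mathbb{F}_p^N$, whose eigenvectors are exactly the additive characters, and to verify everything by a direct computation. First I would check the eigenvalue claim by substitution. For a fixed $\vec{v}$ and any $\vec{u} \in \mathbb{F}_p^N$,
$$(M\chi_{\vec{v}})(\vec{u}) = \sum_{\vec{w} \in \mathbb{F}_p^N} M_{\vec{u}, \vec{w}}\, \chi_{\vec{v}}(\vec{w}) = \sum_{\vec{w}} f(\vec{u} - \vec{w})\, \omega^{\langle \vec{w}, \vec{v}\rangle}.$$
Reindexing by $\vec{x} = \vec{u} - \vec{w}$ (a bijection of $\mathbb{F}_p^N$) and using bilinearity of $\langle\cdot,\cdot\rangle$ together with the fact that $\omega^k$ depends only on $k \bmod p$, this becomes
$$\sum_{\vec{x}} f(\vec{x})\, \omega^{\langle \vec{u} - \vec{x}, \vec{v}\rangle} = \omega^{\langle \vec{u}, \vec{v}\rangle} \sum_{\vec{x}} f(\vec{x})\, \omega^{-\langle \vec{v}, \vec{x}\rangle} = \Bigl(\sum_{\vec{x}} f(\vec{x})\, \omega^{-\langle \vec{v}, \vec{x}\rangle}\Bigr)\chi_{\vec{v}}(\vec{u}).$$
Since this holds for every $\vec{u}$, the vector $\chi_{\vec{v}}$ is an eigenvector of $M$ with the stated eigenvalue.

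Next I would show that the $\chi_{\vec{v}}$ form a basis by proving they are pairwise orthogonal with respect to the standard Hermitian inner product on $\mathbb{C}^{p^N}$. For $\vec{v}, \vec{v}' \in \mathbb{F}_p^N$,
$$\sum_{\vec{u}} \chi_{\vec{v}}(\vec{u}) \overline{\chi_{\vec{v}'}(\vec{u})} = \sum_{\vec{u}} \omega^{\langle \vec{u}, \vec{v} - \vec{v}'\rangle} = \prod_{i=1}^N \Bigl(\sum_{a \in \mathbb{F}_p} \omega^{a(v_i - v_i')}\Bigr),$$
and each inner sum equals $p$ if $v_i = v_i'$ and $0$ otherwise, being a complete geometric sum of $p$-th roots of unity. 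Hence the inner product is $p^N$ when $\vec{v} = \vec{v}'$ and $0$ otherwise, so the $p^N$ vectors $\{\chi_{\vec{v}}\}_{\vec{v} \in \mathbb{F}_p^N}$ are linearly independent and form an orthogonal basis of $\mathbb{C}^{p^N}$. Since $f$ is real-valued, $M$ has real entries, and in the applications where $M$ is also symmetric (that is, $f(\vec{x}) = f(-\vec{x})$, the case relevant to Corollary \ref{cor}) one obtains a genuine real orthogonal eigenbasis by replacing each conjugate pair $\chi_{\vec{v}}, \chi_{-\vec{v}}$, which carry the same real eigenvalue, by their real and imaginary parts; these span the same subspace over $\R$.

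There is no serious obstacle here: the argument is a routine instance of Fourier analysis on $\mathbb{F}_p^N$. The only points deserving a little care are (i) that the exponents $\langle \vec{u}, \vec{v}\rangle$ live in $\mathbb{F}_p$, so $\omega^{\langle \vec{u}, \vec{v}\rangle}$ is well-defined precisely because $\omega^p = 1$; and (ii) tracking the sign in the reindexing step so that the eigenvalue comes out as $\sum_{\vec{x}} f(\vec{x})\, \omega^{-\langle \vec{v}, \vec{x}\rangle}$ and not its complex conjugate. The passage from a complex eigenbasis to a real one at the end is purely cosmetic, but worth recording explicitly since Corollary \ref{cor} is stated for real symmetric matrices.
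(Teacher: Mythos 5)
Your argument is correct and matches the paper's proof essentially verbatim: the same direct verification that $\chi_{\vec{v}}$ is an eigenvector via the reindexing $\vec{x}=\vec{u}-\vec{w}$, followed by showing the characters form a basis, a step the paper dismisses as ``straightforward'' but which you helpfully spell out with the orthogonality computation. Your closing remark about trading the complex eigenbasis for a real one when $M$ is symmetric is a sensible clarification of the slightly loose phrasing ``basis of $\mathbb{R}^{(p^N)}$'' in the lemma, and it is precisely the case needed when the lemma is combined with Corollary \ref{cor}.
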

\begin{proof}
	We first verify $\chi_{\vec{v}}$ is an eigenvector of $M$. We have
	\begin{align*}
	(M \chi_{\vec{v}})_{\vec{z}}&=\sum_{\vec{y}} M_{\vec{z}, \vec{y}}\cdot \chi_{\vec{v}}({\vec{y}}) = \sum_{\vec{y}} f(\vec{z}-\vec{y}) \cdot \omega^{\langle \vec{v},\vec{y} \rangle}\\
	&=\sum_{\vec{x}} f(\vec{x}) \omega^{\langle \vec{v}, \vec{z}-\vec{x} \rangle} = \omega^{\langle \vec{v}, \vec{z} \rangle} \cdot \sum_{\vec{x}} f(\vec{x}) \omega^{-\langle \vec{v}, \vec{x}\rangle}\\
	&=\chi_{\vec{v}}(\vec{z}) \cdot \left(\sum_{\vec{x}} f(\vec{x}) \omega^{-\langle \vec{v}, \vec{x}\rangle}\right)
	\end{align*}
	
	It is straightforward to show that $\chi_{\vec{v}}$ are linearly independent.
\end{proof}

Now we are ready to prove Theorem \ref{thm_intersective}.

\begin{proof}[Proof of Theorem \ref{thm_intersective}]
	From the discussions at the beginning of this section, we only need to upper bound the independence number of $G$.
	
	We define $M$ to be a $p^N \times p^N$ matrix with rows and columns indexed by vectors in $\mathbb{F}_p^N$. We let $M_{\vec{u}, \vec{v}}=(-1)^{c(\vec{u}-\vec{v})}$, for vectors $\vec{u} \neq \vec{v}$ with either $\vec{u}-\vec{v}$ or $\vec{v}-\vec{u}$ in $\{0,1\}^N$; and $0$ otherwise. Here the function $c$ maps a vector in $\mathbb{F}_p^N$ to its number of non-zero coordinates. Clearly $M$ is a pseudo-adjacency matrix of $G$. By Lemma \ref{lemma_eig}, for every $\vec{v}=(v_1, \cdots, v_N) \in \mathbb{F}_p^N$, $\chi_{\vec{v}}$ is an eigenvector of $M$ with eigenvalue equal to 
	\begin{align*}
	\sum_{\vec{x} \in \{0,1\}^N \setminus \vec{0}} (-1)^{c(\vec{x})} \omega^{-\langle \vec{v}, \vec{x}\rangle} + \sum_{\vec{x} \in \{0,1\}^N \setminus \vec{0}} (-1)^{-c(\vec{x})} \omega^{\langle \vec{v}, \vec{x}\rangle}
	\end{align*}
	Note that 
	\begin{align*}
	\sum_{\vec{x} \in \{0,1\}^N} (-1)^{c(\vec{x})} \omega^{-\langle \vec{v}, \vec{x}\rangle} &= \sum_{\vec{x} \in \{0,1\}^N} (-1)^{\sum_{i=1}^N x_i} \omega^{-\langle \vec{v}, \vec{x}\rangle}=\sum_{\vec{x} \in \{0,1\}^N} \prod_{i=1}^N (-1)^{x_i} \omega^{-v_ix_i}\\
	&=\prod_{i=1}^N (1-\omega^{-v_i}).
	\end{align*}
	Similarly one can show that 
	\begin{align*}
	\sum_{\vec{x} \in \{0,1\}^N} (-1)^{-c(\vec{x})} \omega^{\langle \vec{v}, \vec{x}\rangle}=\prod_{i=1}^N (1-\omega^{v_i}).
	\end{align*}
	Therefore $\chi_{\vec{v}}$ corresponds to the eigenvalue
	\begin{align*}
	\prod_{i=1}^N (1-\omega^{-v_i})+\prod_{i=1}^N (1-\omega^{v_i})-2
	\end{align*}
	When $v_j=0$ for some index $j$, $\omega^{v_j}=\omega^{-v_j}=1$, so this gives eigenvalue $-2$. Otherwise all the $v_j \in \{1, \cdots, p-1\}$. This already shows that the number of non-negative eigenvalues is at most $(p-1)^N$, and Corollary \ref{cor} gives an upper bound matching Alon's bound. But in fact we can estimate the number of non-negative eigenvalues more carefully. Note that 
	\begin{align*}
	\prod_{j=1}^N (1-\omega^{-v_j})&=\prod_{j=1}^N (1-\cos (2\pi v_j/p)+i\sin (2\pi v_j/p))\\
	&=\prod_{j=1}^N 2\sin(\pi v_j/p)\cdot e^{i (\pi/2-\pi v_j/p)}\\
	&=\left(\prod_{j=1}^N 2\sin(\pi v_j/p)\right)\cdot e^{i(\pi N/2-\pi \sum_{j=1}^N v_j/p)}
	\end{align*}
	Similarly, 
	\begin{align*}
	\prod_{j=1}^N (1-\omega^{v_j})=\left(\prod_{j=1}^N 2\sin(\pi v_j/p)\right)\cdot e^{-i(\pi N/2-\pi \sum_{j=1}^N v_j/p)}.
	\end{align*}
	Therefore
	\begin{align*}
	\prod_{i=1}^N (1-\omega^{-v_i})+\prod_{i=1}^N (1-\omega^{v_i})-2=2 \left(\prod_{j=1}^N 2\sin(\pi v_j/p)\right)\cos \left(\pi N/2-\pi \sum_{j=1}^N v_j/p \right)-2.\end{align*}
If it is non-negative, then since $\sin(\pi v_j/p) \ge 0$ for $v_j \in \{1, \cdots, p-1\}$, it must hold that  
$$\cos \left(\pi N/2-\pi \sum_{j=1}^N v_j/p \right) > 0.$$
Note that this inequality cannot hold for both $(v_1, \cdots, v_N)$ and $(u_1, \cdots, u_N)=(p-1-v_1, \cdots, p-1-v_p, p-v_{p+1}, \cdots, p-v_N)$. Since
\begin{align*}
\cos \left(\pi N/2-\pi \sum_{j=1}^N u_j/p \right)=-\cos \left(\pi N/2-\pi \sum_{j=1}^N v_j/p \right).
\end{align*}
Therefore there are at least half of those $(v_1, \cdots, v_N) \in [p-2]^p \times [p-1]^{N-p}$ correspond to negative eigenvalues. Therefore 
\begin{align*}
\alpha(G) &\le n_{\ge 0}(M) \le (p-1)^N-\frac{1}{2}(p-2)^p(p-1)^{N-p}\\
&=\left(1-\frac{1}{2}\left(1-\frac{1}{p-1}\right)^p\right)(p-1)^N.
\end{align*}
When $p \rightarrow \infty$ the constant factor tends to $1-1/(2e)$.
\end{proof}
\medskip
\noindent \textbf{Remark. }We believe that for general $p$, a more carefully analysis of the signs of these eigenvalues should show that for at most half of $(v_1, \cdots, v_N) \in [p-1]^N$, 
$\cos \left(\pi N/2-\pi \sum_{j=1}^N v_j/p \right)>0$. This would improve the constant to $1/2$. For some small values of $p$, we can actually obtain better constants. For example, when $p=3$, the same method gives $\alpha(G) \le (1/3+o(1))2^N$.  

\section{Concluding Remarks}
In the first part of this paper, we give an linear algebraic proof for Kleitman's diametric theorem, and study several extensions and generalizations. Below are some observations and related problems.

\begin{list}{\labelitemi}{\leftmargin=1em}
\item We use the Cvetkovi\'c spectral bound to find a tight upper bound for the size of a family of binary vectors with restricted diameter. For the $n$-dimensional lattice $[m]^n$ equipped with Hamming distance, Ahlswede and Khachatrian \cite{a_kh} showed that maximum family of vectors with pairwise distance at most $2d$ is attained by one of the following families, similar to their celebrated Complete Intersection Theorem. For a vector $\vec{v} \in [m]^n$, let $S_{\vec{v}}=\{i: v_i =0\}$. Then the families are
$$\mathcal{F}_i=\{\vec{v}: \vec{v} \in [m]^n, |S_{\vec{v}} \cap [n-2i]| \ge n-d-i\},$$
for $i=0, \cdots, d$. In particular, for fixed $m, d$ and sufficiently large $n$, $\mathcal{F}_0$, i.e.  the Hamming ball of radius $d$, has the maximum size among $\mathcal{F}_i$'s. It would be interesting to see whether the spectral technique we used to prove Kleitman's Theorem can also be applied in this case. Note that the proof of Theorem \ref{thm_kleitman} works almost the same if we replace the generating function $\sum_{k=2t+1}^n f(k) x^{-k}=(x+1)/(x^2-1)^{t+1}$ by the rational function $((m-1)x+1)/((x-1)((m-1)x+1))^{t+1}$. The claims regarding signs of $\lambda_i$ still work, but unfortunately it only leads to an upper bound 
$$|\mathcal{F}| \le \binom{n}{0} + (m-1)^{d-1} \binom{n}{1}+ m^2 \binom{n}{2} + \cdots ,$$
instead of the desired upper bound $$|\mathcal{F}|\le |\mathcal{F}_0|=\binom{n}{0}+(m-1)\binom{n}{1}+ \cdots + (m-1)^d \binom{n}{d}.$$  
But it is still possible that by properly choosing a generating function $f$ (or equivalently a pseudo-adjacency matrix), one can prove the tight upper bound.

\item In Section \ref{sec_ext}, we show that $f_{\{2s+1, \cdots, 2t\}}(n)$ is of the order $\Theta(n^{t-s})$, yet we only determine the exact constant factor for $s=0$ (Kleitman's Theorem), and for $t=s+1$ (Theorem \ref{thm_s+1}). We believe that the lower bound construction in Theorem \ref{thm_lower} is asymptotically best possible. Note that by our method of considering the pseudo-adjacency matrix of the form $\sum_{k} f(k) M_{n, k}$, the Cvetkovi\'c bound always gives a sum of binomial coefficients in the form of $\binom{n}{i}$. Maybe picking a more complicated pseudo-adjacency matrix so that its $(S, T)$-entry does not solely depends on $|S \cap T|$ would be useful here.

\item In Section \ref{sec_ext}, we also establish an upper bound on $f_{\mathcal{L}}(n)$ for general $\mathcal{L}$, using the number of even numbers in $\mathcal{L}$. It is not hard to see that the upper bound in Theorem \ref{thm_even} is tight up to a constant factor, for all $\mathcal{L}=\{2s+2, 2s+4, \cdots, 2t\} \cup \mathcal{L}'$, with $\mathcal{L}'$ only consisting of odd numbers. We are curious whether there are other sets $\mathcal{L}$ of allowed distances that satisfy this property. Also it would be interesting to show that the limit 
$$\lim_{n \rightarrow \infty} \frac{\log f_{\mathcal{L}}(n)}{\log n}$$
exists for all $\mathcal{L}$. And perhaps the limit must be of integer value.

\item Kleitman's diametric theorem may also be generalized in the following way. Given a connected simple graph $G=(V, E)$, the distance of two distinct vertices $u, v$ is the length of the shortest path connecting them. Let $f(G, d)$ be the maximum size of a subset of vertices with pairwise distances at most $d$. And the subset of vertices that play the role of Hamming ball is either $N_{\le d/2}(v)$, all the vertices at distance at most $d/2$ from $v$, for some $v \in V(G)$ when $d$ is even; or $N_{\le (d-1)/2}(u) \cup N_{\le (d-1)/2}(v)$ for some edge $uv \in E(G)$, when $d$ is odd. One could ask a general question: for fixed integer $d$, what graph $G$ satisfies the isodiametric inequality, i.e. $f(G, d)$ is attained by one of the sets defined above? We call such graphs $G$ {\it $d$-isodiametric}. For example, Kleitman's Theorem says that the hypercube $Q^n$ is $d$-isodiametric for $d \le n-1$. It is easy to see that a graph is $1$-isodiametric if and only if it is triangle-free. Is it possible to characterize all $d$-isodiametric graphs, or at least among Cayley graphs or vertex-transitive graphs?
\end{list}

\noindent {\bf Acknowledgments.} We would like to thank Noga Alon, Fedor Petrov and Will Sawin for several useful discussions.

\end{document}